\newcounter{Scounter}
\theoremstyle{theorem}
\newtheorem{thm}{Theorem}
\newtheorem{cor}[thm]{Corollary}
\newtheorem{lem}[thm]{Lemma}
\newtheorem{obs}[thm]{Observation}
\newtheorem{claim}{Claim}
\newtheorem{subclaim}{Subclaim}[claim]
\newtheorem{con}[thm]{Conjecture}
\numberwithin{equation}{section}
\def\P{{ \mathcal{P}}}
\def\Q{{ \mathcal{Q}}}
\def\B{{ \mathcal{B}}}
\date{}
\begin{document}

\title{Partitioning a graph into highly connected subgraphs}
\author{Valentin Borozan$^{1,5}$, Michael Ferrara$^2$, Shinya Fujita$^3$ \\Michitaka Furuya$^4$, Yannis Manoussakis$^5$, \\Narayanan N$^{6}$ and Derrick Stolee$^7$}

\maketitle

\noindent\footnotetext[1]{Alfr\'ed R\'enyi Institute of Mathematics, Hungarian Academy of Sciences, H-1053 Budapest, Re\'altanoda u. 13-15, Hungary; valentin.borozan@gmail.com}
\noindent\footnotetext[2]{Department of Mathematical and Statistical Sciences, Univ.\ of Colorado Denver;  michael.ferrara@ucdenver.edu}
\noindent\footnotetext[3]{Corresponding author; International College of Arts and Sciences, Yokohama City University 22-2, Seto, Kanazawa-ku, Yokohama, 236-0027, Japan; shinya.fujita.ph.d@gmail.com}
\noindent\footnotetext[4]{Department of Mathematical Information Science, Tokyo University of Science, 1-3 Kagurazaka, Shinjuku-ku, Tokyo 162-8601, Japan; michitaka.furuya@gmail.com}
\noindent\footnotetext[5]{L.R.I.,B\^at.490, University Paris 11 Sud, 91405 Orsay Cedex, France; yannis.manoussakis@lri.fr}
\noindent\footnotetext[6]{Department of Mathematics, Indian Institute of Technology Madras, Chennai, India.; naru@iitm.ac.in}
\noindent\footnotetext[7]{Department of Mathematics, Department of Computer Science, Iowa State University, Ames, IA. dstolee@iastate.edu}

\begin{abstract}
Given $k\ge 1$, a \emph{$k$-proper partition} of a graph $G$ is a partition ${\mathcal P}$ of $V(G)$ such that each part $P$ of ${\mathcal P}$ induces a $k$-connected subgraph of $G$.  
We prove that if $G$ is a graph of order $n$ such that $\delta(G)\ge \sqrt{n}$, then $G$ has a $2$-proper partition with at most $n/\delta(G)$ parts. 
The bounds on the number of parts and the minimum degree are both best possible.  
We then prove that if $G$ is a graph of order $n$ with minimum degree 
\[
\delta(G)\ge\sqrt{c(k-1)n},
\] 
where $c=\frac{2123}{180}$, then $G$ has a $k$-proper partition into at most $\frac{cn}{\delta(G)}$ parts. 
This improves a result of Ferrara, Magnant and Wenger [Conditions for Families of Disjoint $k$-connected Subgraphs in a Graph, \textit{Discrete Math.} \textbf{313} (2013), 760--764], and both the degree condition and the number of parts is best possible up to the constant $c$.  
\end{abstract}

\section{Introduction}\label{sec1}

A graph $G$ is \textit{$k$-connected} if the removal of any collection of fewer than $k$ vertices from $G$ results in a connected graph with at least two vertices.  In this paper, we are interested in determining minimum degree conditions that ensure that the vertex set of a graph can be partitioned into sets that each induce a $k$-connected subgraph.  In a similar vein, Thomassen \cite{thom} showed that for every $s$ and $t$, there exists a function $f(s,t)$ such that if $G$ is an $f(s,t)$-connected graph, then $V(G)$ can be decomposed into sets $S$ and $T$ such that $S$ induces an $s$-connected subgraph and $T$ induces a $t$-connected subgraph.  In the same paper, Thomassen conjectured that $f(s,t)=s+t+1$, which would be best possible, and Hajnal \cite{H} subsequently showed that $f(s,t)\le 4s+4t-13$. 

From a vulnerability perspective, highly connected graphs represent robust networks that are resistant to multiple node failures.  When a graph is not highly connected, it is useful to partition the vertices of the graph so that every part induces a highly connected subgraph.  For example, Hartuv and Shamir~\cite{HS} designed a clustering algorithm where the vertices of a graph $G$ are partitioned into highly connected induced subgraphs.  It is important in such applications that each part is highly connected, but also that there are not too many parts.

Given a simple graph $G$ and an integer $k \geq 1$, we say a partition $\P$ of $V(G)$ is \emph{$k$-proper} if for every part $P \in \P$, the induced subgraph $G[P]$ is $k$-connected.  Ferrara, Magnant, and Wenger~\cite{FMW} gave a minimum-degree condition on $G$ that guarantees a $k$-proper partition.

\begin{thm}[Ferrara, Magnant, Wenger~\cite{FMW}]
\label{thmA}
Let $k\geq 2$ be an integer, and let $G$ be a graph of order $n$.
If $\delta (G)\geq 2k\sqrt{n}$, then $G$ has a $k$-proper partition $\P$ with $|\P|\leq 2kn/\delta (G)$.
\end{thm}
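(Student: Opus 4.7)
The plan is to build $\P$ greedily: at each step extract a $k$-connected induced subgraph of order roughly $\delta(G)/(2k)$ from the unused portion of $G$, and at the end absorb any leftover vertices into existing parts.

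The crux is an extraction lemma of the following form: \emph{any graph $H$ with $\delta(H)\ge 2k\sqrt{|V(H)|}$ contains an induced $k$-connected subgraph of order at least $\delta(H)/(2k)$.} To prove it, I would first invoke a classical theorem of Mader to produce \emph{some} $k$-connected induced subgraph $Q_0\subseteq H$, and then iteratively enlarge $Q_0$ by attaching an outside vertex with at least $k$ neighbors in $V(Q_0)$ (which keeps $Q_0$ $k$-connected). An averaging argument, bounding the number of edges from $V(H)\setminus V(Q_0)$ into $V(Q_0)$ below by $|V(Q_0)|(\delta(H)-|V(Q_0)|+1)$, forces the average number of neighbors of an outside vertex in $V(Q_0)$ to exceed $k$ whenever $|V(Q_0)|<\delta(H)/(2k)$. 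The hypothesis $\delta(H)^2\ge 4k^2|V(H)|$ is used precisely here.

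With the lemma in hand, iterate: extract $P_1$ of size $\ge\delta(G)/(2k)$, remove $V(P_1)$, extract $P_2$ from the residual, and so on. Each extracted part has at least $\delta(G)/(2k)$ vertices, so the final number of parts is automatically at most $2kn/\delta(G)$. When extraction stops --- because the residual's minimum degree or size no longer satisfies the lemma's hypothesis --- a leftover set $R$ remains; each $v\in R$ still has $\deg_G(v)\ge 2k\sqrt{n}$, and since $|\P|\le 2kn/\delta(G)\le\sqrt{n}$, a pigeonhole argument shows some $P_j$ contains at least $k$ neighbors of $v$. Absorbing $v$ into $P_j$ preserves $k$-connectivity, because inserting a vertex with $\ge k$ neighbors into a $k$-connected graph keeps it $k$-connected.

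The main obstacle is the tension between the extraction lemma and the iteration: each extracted part can drop the residual's minimum degree by up to $\delta(G)/(2k)$, so after only a few rounds the hypothesis of the lemma may fail on the residual even though most of the vertices are still unpartitioned. Handling this --- by carefully choosing which vertices go into each extracted part so as to spread out the induced degree loss on outside vertices, or by formulating a more robust version of the key lemma tolerating larger deficits in minimum degree --- is where the real technical work lies. This tradeoff is precisely the source of the factor $2k$ in both the hypothesis and the conclusion of the theorem, and improving it is exactly what the main results of the present paper accomplish.
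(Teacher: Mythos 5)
Your greedy outline is on the right track---it is essentially the strategy the paper itself uses in Section~\ref{sec3} to improve Theorem~\ref{thmA}---but the obstacle you flag at the end is not a technicality to defer; it is the missing idea, and the argument does not close without it. If you extract an arbitrary $k$-connected part $P_i$ of size roughly $\delta/(2k)$, the residual's minimum degree could drop by that full amount, so after $O(1)$ rounds the hypothesis of your extraction lemma fails even though almost all vertices remain unpartitioned; neither ``spreading the degree loss'' nor ``a more tolerant lemma'' is substantiated, and you never reach the absorption stage in good shape. The fix is to extract a \emph{maximal} $k$-connected subgraph $H_i$ of the residual at each step. Then the very fact you invoke only for absorption---attaching a vertex with at least $k$ neighbors to a $k$-connected graph preserves $k$-connectivity---applies in the opposite direction: by maximality (equivalently, via Menger, as in the proof of Theorem~\ref{thm3}), no vertex outside $H_i$ has $k$ or more neighbors in $H_i$, so the minimum degree of the residual drops by at most $k-1$ per round. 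With this, one can track the iteration to its end, which is when the residual is itself $k$-connected; this also removes the absorption phase, which as set up is shaky anyway, since when extraction stalls the leftover set $R$ need not be small, and a vertex of $R$ could have most of its $\delta$ neighbors inside $R$ rather than in the partitioned parts, defeating your pigeonhole count.

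There is also a gap inside the extraction lemma itself: the averaging step does not force growth when $Q_0$ is small. If $|Q_0|$ is near $k+1$, the lower bound $|Q_0|(\delta(H)-|Q_0|+1)$ on edges leaving $Q_0$ can lie well below $(k-1)(|V(H)|-|Q_0|)$ under the hypothesis $\delta(H)\geq 2k\sqrt{|V(H)|}$ (take $|V(H)|$ large), so nothing guarantees an outside vertex with at least $k$ neighbors in $Q_0$. The cleanest repair is to invoke Mader's theorem with a larger connectivity target, roughly $\delta(H)/\gamma$ with $\gamma$ as in Observation~\ref{obs:gamma}: the resulting subgraph already has order at least $\delta(H)/\gamma+1>\delta(H)/(2k)$, so the growth step is unnecessary. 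Finally, note that the present paper does not reprove Theorem~\ref{thmA}---it is quoted from~\cite{FMW}---but Theorem~\ref{thm3} supersedes it by exactly the mechanism above, with the factor $2k$ replaced by a constant independent of $k$.
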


In addition, they present a graph $G$ with $\delta(G) = (1+o(1))\sqrt{(k-1)n}$ that contains no $k$-proper partition.
This example, which we make more precise below, leads us to make the following conjecture.

\begin{con}
\label{con1}
Let $k\geq 2$ be an integer, and let $G$ be a graph of order $n$.
If $\delta (G)\geq \sqrt{(k-1)n}$, then $G$ has a $k$-proper partition $\P$ with $|\P|\leq \frac{n-k+1}{\delta-k+2}$.
\end{con}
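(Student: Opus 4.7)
The plan is to proceed by a greedy extraction procedure combined with an extremal analysis. We iteratively select a $k$-connected induced subgraph $H_i$ of the current remainder $G_{i-1}$ (with $G_0 := G$), designate $V(H_i)$ as a part of $\P$, and delete it, stopping when fewer than $\delta - k + 2$ vertices remain and absorbing those into a previously selected part. The target bound $\frac{n-k+1}{\delta-k+2}$ is attained provided each extracted part has order at least $\delta - k + 2$ and at most $k-1$ vertices need to be absorbed at the end.

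The crux is an \emph{extraction lemma}: if $H$ is a graph with $\delta(H) \geq \sqrt{(k-1)|V(H)|}$, then $H$ contains a $k$-connected induced subgraph of order at least $\delta(H) - k + 2$. The approach here is the standard peeling-by-small-separators argument: while the current subgraph has a vertex separator of size at most $k-1$, discard the smaller side and iterate. The $\sqrt{(k-1)|V(H)|}$ lower bound on $\delta$ is calibrated so that the smaller side of every $(k-1)$-separator must itself be small, and the process terminates on a $k$-connected block of order at least $\delta - k + 2$ rather than prematurely exhausting the vertex set.

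After extracting $H_i$, we need $G_i := G_{i-1} - V(H_i)$ to still satisfy $\delta(G_i) \geq \sqrt{(k-1)|V(G_i)|}$ so that we can iterate. A vertex of $V(G_i)$ loses at most $|V(H_i)|$ neighbors, so $H_i$ must be chosen of size close to the lower bound $\delta - k + 2$; an overshoot would be fatal. Here again the square-root regime is favorable: the recurrence $|V(G_i)| \leq n - i(\delta - k + 2)$ preserves the hypothesis provided each vertex loses no more than roughly $k-1$ neighbors per extraction, which is exactly what a tight extraction lemma would guarantee.

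The hard part is twofold. First, the extraction lemma with the sharp order $\delta - k + 2$---as opposed to the much weaker bound implicit in Theorem~\ref{thmA}---appears to demand a delicate accounting, likely involving a local-swap or discharging argument to show that the peeling process cannot shrink the surviving block below this threshold. Second, the residual absorption step must merge up to $k-1$ leftover vertices into an already-selected $k$-connected part while preserving $k$-connectivity; this relies on each leftover vertex having at least $k$ neighbors in that part, which must be arranged by a judicious choice of the final $H_t$, perhaps by requiring $H_t$ to contain the residual neighborhood structure. Overcoming these obstacles is precisely what separates the conjecture from the quantitatively weaker bounds currently known.
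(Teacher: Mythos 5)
The statement you are attempting to prove is labeled Conjecture~\ref{con1} in the paper; it is \emph{not} proved there for general $k$. The paper verifies only the case $k=2$ (Theorem~\ref{thm1}), by an entirely different argument based on block decomposition and induction on $n$, and for general $k$ it establishes only the weaker Theorem~\ref{thm2}, which has an extra multiplicative constant $\tfrac{2123}{180}$ in both the degree threshold and the part count. So there is no proof in the paper to match your proposal against; you are outlining an approach to an open problem, which you yourself acknowledge by listing the two ``hard parts.''

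Within the outline there are concrete difficulties beyond those you flag. Your degree-preservation reasoning is internally inconsistent: you write that each vertex of $G_i$ ``loses at most $|V(H_i)|$ neighbors, so $H_i$ must be chosen of size close to $\delta-k+2$; an overshoot would be fatal,'' and then that the hypothesis is preserved ``provided each vertex loses no more than roughly $k-1$ neighbors per extraction, which is exactly what a tight extraction lemma would guarantee.'' The size bound on $H_i$ does not control the degree loss of external vertices. What does control it, and what the paper's proof of Theorem~\ref{thm3} actually uses, is \emph{maximality}: if $H_i$ is a maximal $k$-connected subgraph, Menger's theorem forces every external vertex to have at most $k-1$ neighbors in $H_i$. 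That requires $H_i$ to be as large as possible, not small; taking $H_i$ close to the lower bound $\delta-k+2$ would destroy the very property you need. Maximality is also what makes a large $H_i$ harmless for the part count, so there is no ``fatal overshoot.''

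The extraction lemma itself is the substantive gap, and the peeling-by-small-separators sketch does not obviously deliver it. What is available in the literature---and what the paper leans on---is much weaker: Mader's $\gamma\le 4$ and Yuster's Theorem~\ref{thm:Yuster} guarantee a $k$-connected subgraph only when the \emph{average} degree is at least a constant (near $\tfrac{193}{60}$ in the paper) times $(k-1)$, and the resulting subgraph has order roughly $\delta/\gamma$, a factor of $\gamma>3$ short of the $\delta-k+2$ you need. Getting the extraction up to $\delta-k+2$ is essentially equivalent in strength to Conjecture~\ref{con1} itself; an argument for it would need to go well beyond discarding sides of small separators.

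Finally, the absorption step is also fragile. Absorbing up to $k-1$ leftover vertices into a previously chosen $k$-connected part while keeping the part $k$-connected requires those vertices to have at least $k$ neighbors into that part \emph{and} to be attached so that no small cut is created; ``a judicious choice of the final $H_t$'' does not resolve this. The paper's proof of the weaker Theorem~\ref{thm3} sidesteps absorption entirely: it shows via Lemma~\ref{lemma:kconn} that after the right number of iterations the residual graph $G_t$ satisfies $\delta_t \ge \tfrac{n_t+k-2}{2}$ and is therefore itself $k$-connected, so it becomes the final part. If you pursue a greedy strategy, terminating in that fashion is likely a sounder plan than post-hoc absorption.
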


 To see that the degree condition in Conjecture \ref{con1}, if true, is approximately best possible, let $n,\ell$ and $p$ be integers such that $\ell=\sqrt{(k-1)(n-1)}$ and $p= \frac{\ell}{(k-1)}=\frac{n-1}{\ell}$.  Starting from $H=pK_{\ell}$, so that $|H|=n-1$, construct the graph $G$ by adding a new vertex $v$ that is adjacent to exactly $k-1$ vertices in each component of $H$.  Then $\delta(G)=\ell-1$, but there is no $k$-connected subgraph of $G$ that contains $v$.  

To see that the number of components in Conjecture \ref{con1} is best possible, let $r$ and $s$ be integers such that $r=\sqrt{(k-1)n}-k+2$ and $s=\frac{n-k+1}{r}$.  Consider then $G=sK_r\vee K_{k-1}$, which has minimum degree $r+k-2=\sqrt{(k-1)n}$, while every $k$-proper partition has at least $s=\frac{n-k+1}{\delta-k+2}$ parts.  

As an interesting comparison, Nikiforov and Shelp~\cite{NS07} give an approximate version of Conjecture~\ref{con1} with a slightly weaker degree condition.  Specifically, they prove that if $\delta(G) \geq \sqrt{2(k-1)n}$, then there exists a partition of $V(G)$ such that $n - o(n)$ vertices are contained in parts that induce $k$-connected subgraphs.
 
In Section~\ref{sec2}, we verify Conjecture~\ref{con1} in the case $k = 2$.

\begin{thm}
\label{thm1}
Let $G$ be a graph of order $n$.
If $\delta (G)\geq \sqrt{n}$, then $G$ has a $2$-proper partition $\P$ with $|\P|\leq (n-1)/\delta (G)$.  
\end{thm}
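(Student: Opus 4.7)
My plan is to argue by induction on $n$, with the base cases of small $n$ being direct (for $n\leq 4$ the hypothesis $\delta(G)\geq\sqrt{n}$ and the fact that a $2$-connected part requires at least three vertices together force $G$ itself to be $2$-connected, yielding a one-part partition).

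I first reduce to the case of connected $G$: if $G$ has components $C_1,\ldots,C_t$, each satisfies $\delta(C_i)\geq\delta(G)\geq\sqrt{n}\geq\sqrt{|C_i|}$, so by induction each admits a $2$-proper partition with at most $(|C_i|-1)/\delta(G)$ parts, and summing yields at most $(n-t)/\delta(G)\leq (n-1)/\delta(G)$ parts. If $G$ is $2$-connected, the partition $\{V(G)\}$ suffices. Otherwise $G$ has a cut vertex; I select an endblock $B$ of $G$ with its unique cut vertex $c$. Since $\delta(G)\geq 2$, $B$ is $2$-connected, and every non-cut vertex of $B$ has all of its $G$-neighbors in $V(B)$ (otherwise it would lie in two blocks and hence be a cut vertex), forcing $|V(B)|\geq\delta(G)+1$.

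I would then take $P_1=V(B)$ as the first part. Writing $G'=G-V(B)$, every vertex of $G'$ loses at most its $G$-neighbor $c$, so $\delta(G')\geq\delta(G)-1$ and $|G'|=n-|V(B)|\leq n-\delta(G)-1$. Applying the inductive hypothesis (in a suitably strengthened form; see below) to $G'$ with denominator $\delta(G)$ yields a $2$-proper partition of $G'$ with at most $(|G'|-1)/\delta(G)$ parts; combining with $P_1$ gives at most
\[
1+\frac{|G'|-1}{\delta(G)} \le 1+\frac{n-\delta(G)-2}{\delta(G)} = \frac{n-2}{\delta(G)} \le \frac{n-1}{\delta(G)}
\]
parts, as required.

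The main obstacle is that the natural inductive hypothesis requires $\delta(G')\geq\sqrt{|G'|}$, which may fail in the tight regime (e.g.\ when $|V(B)|$ attains the minimum $\delta(G)+1$ and $n$ is close to $\delta(G)^2$), since $\delta(G')$ can drop by one upon removal of $V(B)$. Handling this is the crux of the argument: one either strengthens the inductive statement so that a unit drop in minimum degree is absorbed while keeping $\delta(G)$ in the denominator, or selects the endblock $B$ more cleverly (e.g.\ one whose cut vertex has high external degree, so that $\delta(G')$ does not drop), or, when $B$ is sufficiently dense, replaces $P_1$ by $V(B)\setminus\{c\}$ (which remains $2$-connected when $B-c$ is, as with the cliques in the extremal example $sK_r\vee K_1$), thereby keeping $c$ in $G'$ with its external degree fully intact. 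Verifying that one of these refinements succeeds in the borderline cases is the most delicate step.
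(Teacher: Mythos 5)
Your proposal correctly reduces to the connected, non-2-connected case, and you correctly identify the central obstacle: after deleting an end-block $B$, the quantity $\delta(G')$ can fall to $\delta(G)-1$ while $|G'|$ only drops by roughly $\delta(G)$, so $\delta(G')\ge\sqrt{|G'|}$ is exactly the inequality that can fail for $n\ge 5$. But the proposal stops there and offers three candidate remedies without carrying any of them out, and none of them works as stated. Keeping $c$ and using $P_1=V(B)\setminus\{c\}$ does not help, because $c$ may have nearly all of its $\delta(G)$-many neighbors inside $B$, so $d_{G'}(c)$ can collapse to $1$. Choosing an end-block whose cut vertex has large external degree is not always available (consider a chain of blocks with only two end-blocks, both of whose cut vertices have a single external edge). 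And ``strengthening the hypothesis to absorb a unit drop in $\delta$'' runs into the problem that repeatedly peeling end-blocks can make $\delta$ drop arbitrarily far below $\sqrt{n}$, so a one-unit slack does not close the induction.

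The paper's actual argument is structurally different. It does use the outer induction on $n$, but only to dispose of the easy case where some block has order at least $2\delta(G)$ (their Claim~2.1, where the arithmetic $\sqrt{n}-1>\sqrt{n-2\sqrt{n}}$ \emph{does} go through because the removed block is large). In the remaining case, where every block has order at most $2\delta(G)-1$, the paper does not recurse on $n$ at all. Instead it proves a sequence of structural facts about blocks and cut vertices (every end-block has order $\ge\delta(G)+1$; each vertex has at least two neighbours that are non-cut vertices; for an end-block $B$ with cut vertex $x$, $B-x$ is $2$-connected), then roots the block--cut tree at a fixed end-block $B_0$ and runs a \emph{separate} induction on the height of that rooted tree, constructing what they call an ``extendable'' $2$-proper partition of each subtree --- one in which every part has size at least $\delta(G)$. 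The size lower bound $|P|\ge\delta(G)$ for every part is the strengthened invariant that replaces the failing degree condition and makes the final count $|\P|\le (n-1)/\delta(G)$ a one-line consequence. This block-tree bookkeeping (in particular Subclaim~2.5.1, which locates a block $A$ of $B^*-u$ of order at least $\delta(G)$ containing all of $X_{B^*}$) is the real content of the proof, and it is precisely the piece your sketch leaves open.
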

 
Ore's Theorem \cite{Ore} states that if $G$ is a graph of order $n\ge 3$ such that $\sigma_2(G) = \min\{ d(u) + d(v) \mid uv \notin E(G)\} \ge n$, then $G$ is hamiltonian, and therefore has a trivial $2$-proper partition.
As demonstrated by Theorem \ref{thm1} however, the corresponding minimum degree threshold is considerably different. 
Note as well that if $G$ has a $2$-factor ${\mathcal F}$, then $G$ has a $2$-proper partition, as each component of ${\mathcal F}$ induces a hamiltonian, and therefore $2$-connected, graph.  Consequently, the problem of determining if $G$ has a 2-proper partition can also be viewed as an extension of the 2-factor problem \cite{AKBook, PSurv}, which is itself one of the most natural generalizations of the hamiltonian problem \cite{RGHamSurv1,RGHamSurv2,RGHamSurv3}.  

In Section~\ref{sec3}, we improve the bound on the minimum degree to guarantee a $k$-proper partition for general $k$, as follows.

\begin{thm}
\label{thm2}
If $G$ is a graph of order $n$ with
\[
\delta(G) \geq \sqrt{\frac{2123}{180}(k-1)n}\]
then $G$ has a $k$-proper partition into at most $\frac{2123n}{180\delta}$ parts.
\end{thm}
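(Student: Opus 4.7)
The plan is to prove Theorem~\ref{thm2} by induction on $n$, with the core ingredient being an extraction lemma: the hypothesis $\delta(G) \geq \sqrt{c(k-1)n}$ (with $c = 2123/180$) should guarantee a $k$-connected induced subgraph $H \subseteq G$ whose order $s := |V(H)|$ lies in a prescribed two-sided window. The upper bound on $s$ is needed so that $G' := G - V(H)$ still satisfies the inductive hypothesis $\delta(G') \geq \sqrt{c(k-1)(n-s)}$; using $\delta(G') \geq \delta(G) - s$, this reduces to an algebraic inequality in $s$. The lower bound on $s$ is needed so that adding the single part $V(H)$ to the inductively obtained partition of $G'$ does not exceed the allowed budget of $cn/\delta$ parts; in the idealized case where $\delta(G')$ is close to $\delta(G)$, this lower bound is $s \geq \delta/c$, and in general a slightly messier version of the same condition suffices.

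To construct $H$, I would start from a vertex $v$ of minimum degree and analyze $G[N[v]]$. If $G[N(v)]$ is already $(k-1)$-connected, then $G[N[v]]$ is $k$-connected and we take $H = G[N[v]]$ with $|V(H)| \leq \delta + 1$. Otherwise, I would locate a vertex cut of $G[N(v)]$ of size at most $k-2$ and repair it by grafting in a controlled number of vertices from outside $N[v]$. The hypothesis $\delta(G) \geq \sqrt{c(k-1)n}$ ensures that each grafted vertex has enough neighbors inside the current $H$ to keep driving $H$ toward $k$-connectivity, and a careful accounting of the number of repairs bounds $|V(H)|$ by an expression involving $\delta$, $k$, and $n$. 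The specific value $c = 2123/180$ is intended to emerge from jointly optimizing this size bound against the algebraic requirements above.

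Given the extraction lemma, the inductive step is essentially formal: one takes a $k$-proper partition $\P'$ of $G'$ from the induction hypothesis and sets $\P := \P' \cup \{V(H)\}$. The bound $|\P| \leq cn/\delta$ then follows from $|\P'| \leq c(n-s)/\delta(G')$ together with the chosen $s$, and the base case handles $n$ small enough that $G$ itself is $k$-connected, in which case $\P = \{V(G)\}$ suffices. The main obstacle, by far, is the extraction lemma: balancing the lower and upper bounds on $s$ so that a single $H$ satisfies both is delicate, and the constant $2123/180$ is precisely the smallest $c$ for which the patching procedure produces an admissible $s$ in all regimes of $\delta$ and $n$. An alternative route would be to extract a maximum packing of small $k$-connected subgraphs simultaneously and argue that the residual set is small enough to be absorbed—this avoids a worst-case degree loss at each step but shifts the combinatorial work into bounding the residual, and the same constant would ultimately appear.
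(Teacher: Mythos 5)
Your proposal takes a genuinely different route from the paper, and it contains a fatal gap. The paper does not do induction on $n$ at all: it runs a greedy process, at each step removing a \emph{maximal} $k$-connected subgraph $H_i$, and tracks the residual minimum degree $\delta_i$ and order $n_i$ across iterations. The decisive observation is that because $H_i$ is maximal, Menger's theorem forces every vertex outside $H_i$ to have at most $k-1$ neighbors in $H_i$, whence $\delta_{i+1}\geq \delta_i - (k-1)$. Combined with Yuster's bound (via Corollary~\ref{cor:Yuster}) guaranteeing that each $H_i$ has order greater than $\delta_i/\gamma$ with $\gamma=\tfrac{193}{60}$, the paper shows the process halts within the allowed budget of parts. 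The constant $\tfrac{2123}{180}=\tfrac{11}{3}\cdot\tfrac{193}{60}$ comes from this combination, not from optimizing any size window.

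The gap in your argument is the degree bound $\delta(G')\geq \delta(G)-s$. This is far too weak: in the tight case $\delta^2 = c(k-1)n$, the requirement $(\delta-s)^2 \geq c(k-1)(n-s)$ simplifies to $s\bigl(s - 2\delta + c(k-1)\bigr)\geq 0$, which (since $\delta > c(k-1)$, hence $2\delta - c(k-1) > \delta$) forces $s \geq 2\delta - c(k-1) > \delta$. That contradicts $\delta - s > 0$, so no admissible window for $s$ exists at all, and the induction cannot close. You implicitly assumed this yields an \emph{upper} bound on $s$; it is actually an infeasible \emph{lower} bound. The cure is exactly the paper's Menger observation: removing a maximal $k$-connected subgraph only costs $k-1$ in minimum degree, independently of $|V(H)|$. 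Separately, your proposed extraction lemma (grow $H$ from $G[N[v]]$ by grafting in vertices to repair small cuts) is not established and is a substantial claim on its own; the paper sidesteps it entirely by citing the edge-counting bound of Yuster to produce a $k$-connected subgraph of the needed order directly from the average-degree hypothesis.
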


Conjecture \ref{con1} yields that both the degree condition and the number of parts in the partition in Theorem \ref{thm2} are best possible up to the constant $\frac{2123}{180}$. Our proof of Theorem~\ref{thm2} has several connections to work of Mader~\cite{Mader} and Yuster~\cite{Yuster}, discussed in Section~\ref{sec3}.
One interesting aspect of our proof is that under the given conditions, the greedy method of building a partition by iteratively removing the largest $k$-connected subgraph will produce a $k$-proper partition.


\subsection*{Definitions and Notation}

All graphs considered in this paper are finite and simple, and we refer the reader to \cite{B} for terminology and notation not defined here.  
Let $H$ be a subgraph of a graph $G$, and for a vertex $x\in V(H)$, let $N_H(x)=\{y\in V(H) \mid xy\in E(H)\}$. 

A subgraph $B$ of a graph $G$ is a \textit{block} if $B$ is either a bridge or a maximal 2-connected subgraph of $G$.  
It is well-known that any connected graph $G$ can be decomposed into blocks.  
A pair of blocks $B_1, B_2$ are necessarily edge-disjoint, and if two blocks intersect, then their intersection is exactly some vertex $v$ that is necessarily a cut-vertex in $G$.  
The {\it block-cut-vertex graph} of $G$ is defined to be the bipartite graph $T$ with one partite set comprised of all cut-vertices of $G$ and the other partite set comprised of all blocks of $G$. 
For a cut-vertex $v$ and a block $B$, $v$ and $B$ are adjacent in $T$ if and only if $v$ is a vertex of $B$ in $G$. 

\section{2-Proper Partitions}\label{sec2}

It is a well-known fact that the block-cut-vertex graph of a connected graph is a tree.  
This observation makes the block-cut-vertex graph, and more generally the block structure of a graph, a useful tool, specifically when studying graphs with connectivity one.  
By definition, each block of a graph $G$ consists of at least two vertices.  
A block $B$ of $G$ is \textit{proper} if $|V(B)|\ge 3$. 
When studying a block decomposition of $G$, the structure of proper blocks is often of interest.  
In particular, at times one might hope that the proper blocks will be pairwise vertex-disjoint.  
In general, however, such an ideal structure is not possible.  However, the general problem of determining conditions that ensure a graph has a 2-proper partition, addressed in one of many possible ways by Theorem \ref{thm1}, can be viewed as a vertex analogue to that of determining when a graph has vertex-disjoint proper blocks. 

\medbreak\noindent\textit{Proof of Theorem~\ref{thm1}.}\quad
We proceed by induction on $n$, with the base cases $n\leq 4$ being trivial.
Thus we may assume that $n\geq 5$.

First suppose that $G$ is disconnected, and let $G_{1},\cdots ,G_{m}$ be the components of $G$.  
For each $1\leq i\leq m$, since
\[
	\delta (G_{i})\geq \delta (G)\geq \sqrt{n}>\sqrt{|V(G_{i})|},
\]
$G_{i}$ has a $2$-proper partition $\P_{i}$ with at most $(|V(G_i)|-1)/\delta(G_{i})~(\leq (|V(G_{i})|-1)/\delta (G))$ parts, by induction.  
Therefore, $\P =\bigcup _{1\leq i\leq m}\P_{i}$ is a $2$-proper partition of $G$ with
\[
	|\P |=\sum _{1\leq i\leq m}|\P _{i}|\leq \sum _{1\leq i\leq m}(|V(G_{i})|-1)/\delta (G)<(n-1)/\delta (G).
\]

Hence we may assume that $G$ is connected.  
If $G$ is $2$-connected, then the trivial partition $\P =\{V(G)\}$ is a desired $2$-proper partition of $G$, so we proceed by supposing that $G$ has at least one cut-vertex.

\begin{claim}
\label{cl2.1}
If $G$ has a block $B$ of order at least $2\delta (G)$, then $G$ has a $2$-proper partition $\P$ with $|\P|\leq (n-1)/\delta (G)$.
\end{claim}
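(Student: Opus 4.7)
The plan is to induct on $n$ (in parallel with the outer induction of Theorem~\ref{thm1}), taking $V(B)$ itself as one part of the desired $2$-proper partition and applying the induction hypothesis to what remains. Since $|V(B)|\ge 2\delta(G)\ge 2\sqrt{n}>2$, the block $B$ is a proper block and is $2$-connected. Let $C_{1},\ldots ,C_{t}$ be the components of $G-V(B)$; because $G$ is connected with a cut-vertex, $B\ne G$, so $t\ge 1$.

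The first and main task is a structural observation: for every component $C$ of $G-V(B)$, there is a unique cut-vertex $v_{C}\in V(B)$ such that every edge of $G$ from $V(C)$ to $V(B)$ is incident to $v_{C}$. Indeed, if two distinct vertices $u_{1},u_{2}\in V(B)$ had neighbors in $V(C)$, then concatenating a path through $V(C)$ between those neighbors with a path in $B$ from $u_{2}$ to $u_{1}$ would produce a cycle containing an edge of $B$ together with a vertex outside $V(B)$, contradicting the maximality of $B$ as a $2$-connected block.

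Given this, every $w\in V(C_{i})$ satisfies $|N_{G}(w)\cap V(B)|\le 1$, so
\[
\delta (C_{i})\ge \delta (G)-1\ge \sqrt{n}-1\ge \sqrt{n-2\sqrt{n}}\ge \sqrt{|V(C_{i})|},
\]
and each $C_{i}$ meets the hypothesis of Theorem~\ref{thm1}. Since $|V(C_{i})|<n$, the induction hypothesis yields a $2$-proper partition $\P_{i}$ of $C_{i}$ with $|\P_{i}|\le (|V(C_{i})|-1)/\delta (C_{i})\le (|V(C_{i})|-1)/(\delta(G)-1)$.

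Finally, setting $\P=\{V(B)\}\cup \bigcup _{i}\P_{i}$ and writing $\delta =\delta (G)$, the count becomes
\[
|\P|\le 1+\frac{n-|V(B)|-t}{\delta -1},
\]
and verifying $|\P|\le (n-1)/\delta $ reduces (after clearing denominators) to $\delta |V(B)|+\delta t\ge n-1+\delta ^{2}$, which follows at once from $|V(B)|\ge 2\delta $ together with $\delta ^{2}\ge n$. I expect the block-theoretic observation in the second paragraph to be the main obstacle; once that is in place, both the degree check for each $C_{i}$ and the final arithmetic are routine.
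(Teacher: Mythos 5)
Your proof is correct and takes essentially the same approach as the paper: extract $V(B)$ as a single part, apply the inductive hypothesis to what remains, and close with the same arithmetic estimate. The only difference is cosmetic---you decompose $G-V(B)$ into its components and prove the finer block-theoretic fact that each component attaches to $B$ through a single cut-vertex, whereas the paper applies Theorem~\ref{thm1} to $G-V(B)$ as a whole and needs only the weaker (implicitly used) observation that every vertex outside $B$ has at most one neighbor in $V(B)$, giving $\delta(G-V(B))\ge\delta(G)-1$.
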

\begin{proof}
It follows that 
\[
	|V(G)-V(B)|\leq n-2\delta (G)\leq n-2\sqrt{n},
\]
and 
\[
	\delta (G-V(B))\geq \delta (G)-1\geq \sqrt{n}-1.
\]
Since $\sqrt{n}-1=\sqrt{n-2\sqrt{n}+1}>\sqrt{n-2\sqrt{n}}$,
\[
	\delta (G-V(B))\geq \sqrt{n}-1>\sqrt{|V(G)-V(B)|}.
\]
Applying the induction hypothesis, $G-V(B)$ has a $2$-proper partition $\P$ with 
\[
	|\P|\leq (n-|V(B)|-1)/\delta (G-V(B))\leq (n-2\delta (G)-1)/(\delta (G)-1).
\]
Since $(n-1)(\delta (G)-1)-(n-\delta (G)-2)\delta (G)=\delta (G)^{2}-n+\delta (G)+1>n-n=0$, $(n-1)/\delta (G)\geq (n-\delta (G)-2)/(\delta (G)-1)$, and hence
\[
	|\P\cup \{V(B)\}|\leq \frac{n-2\delta (G)-1}{\delta (G)-1}+1=\frac{n-\delta (G)-2}{\delta (G)-1}\leq \frac{n-1}{\delta (G)}.
\]
Consequently $\P\cup \{V(B)\}$ is a $2$-proper partition of $G$ with $|\P\cup \{V(B)\}|\leq (n-1)/\delta (G)$.
\end{proof}

By Claim~\ref{cl2.1}, we may assume that every block of $G$ has order at most $2\delta (G)-1$.
Let $\B$ be the set of blocks of $G$.
For each $B\in \B$, let $X_{B}=\{x\in V(B)\mid x$ is not a cut-vertex of $G\}$.
Note that $N_{G}(x)\subseteq V(B)$ for every $x\in X_{B}$.
Let $X=\bigcup _{B\in \B}X_{B}$.
For each vertex $x$ of $G$, let $\B_{x}=\{B\in \B\mid x\in V(B)\}$.
In particular, for each cut-vertex $x$ of $G$ we have $|\B_{x}|\geq 2$.

\begin{claim}
\label{cl2.2}
Let $x$ be a cut-vertex of $G$, and let $C$ be a component of $G-x$.
Then $|V(C)|\geq \delta (G)$.
In particular, every end-block of $G$ has order at least $\delta (G)+1$.
\end{claim}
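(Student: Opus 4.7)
The plan is to exploit the two defining properties of the setup: first, a vertex $y \in V(C)$ has at least $\delta(G)$ neighbours in $G$; second, because $C$ is a component of $G-x$, the only neighbour of $y$ that can possibly lie outside $V(C)$ is $x$ itself. These two observations essentially force $C$ to contain many vertices.

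First I would fix an arbitrary vertex $y \in V(C)$ and observe that every edge of $G$ incident to $y$ either stays inside $V(C)$ or goes to $x$. Hence at least $\delta(G)-1$ neighbours of $y$ lie in $V(C)\setminus\{y\}$, and counting $y$ itself gives $|V(C)|\geq 1+(\delta(G)-1)=\delta(G)$, which is the first statement.

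For the ``in particular'' clause I would let $B$ be an end-block of $G$ and let $x$ be its unique cut-vertex. The key structural observation is that $V(B)\setminus\{x\}$ is \emph{exactly} one component of $G-x$: indeed, no vertex $z\in V(B)\setminus\{x\}$ is a cut-vertex of $G$, so every edge of $G$ incident to $z$ lies in the unique block of $G$ containing $z$, namely $B$ itself. Consequently no edge of $G$ leaves $V(B)$ except at $x$, so the component of $G-x$ containing any vertex of $V(B)\setminus\{x\}$ is precisely $V(B)\setminus\{x\}$. Applying the first part to this component then yields $|V(B)|-1\geq\delta(G)$, i.e., $|V(B)|\geq\delta(G)+1$.

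There is no real obstacle here: the entire argument is structural and uses only the definition of a cut-vertex, the minimum-degree bound, and the basic fact about end-blocks that their non-cut-vertices have no edges to the rest of the graph. The only step requiring a little care is verifying that $V(B)\setminus\{x\}$ is a full component of $G-x$ rather than merely a subset of one, and this is handled by the block-decomposition observation recorded in the Definitions and Notation subsection.
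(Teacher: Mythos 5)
Your proof of the main inequality $|V(C)|\geq\delta(G)$ is essentially identical to the paper's (a neighbour count of an arbitrary $y\in V(C)$, noting that only $x$ can be a neighbour of $y$ outside $C$). The paper leaves the ``in particular'' clause implicit, whereas you spell out correctly that for an end-block $B$ with cut-vertex $x$, the set $V(B)\setminus\{x\}$ is a full component of $G-x$, so the first part applies directly.
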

\begin{proof}
Let $y\in V(C)$.
Note that $d_{C}(y)\geq d_{G}(y)-1\geq \delta (G)-1$.
Since $N_{C}(y)\cup \{y\}\subseteq V(C)$, $(\delta (G)-1)+1\leq d_{C}(y)+1\leq |V(C)|$.
\end{proof}

\begin{claim}
\label{cl2.3}
For each $x\in V(G)$, $|N_{G}(x)\cap X|\geq 2$.
In particular, for a block $B$ of $G$, if $X_{B}\not=\emptyset $, then $|X_{B}|\geq 3$.
\end{claim}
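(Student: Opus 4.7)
\medbreak\noindent\emph{Proof plan.}\quad
The plan is to fix an arbitrary vertex $x\in V(G)$ and to bound $m$, the number of cut-vertices of $G$ that are adjacent to $x$. Since $|N_G(x)|\geq \delta(G)$, the desired conclusion $|N_G(x)\cap X|\geq 2$ will follow as soon as we show $m\leq \delta(G)-2$.

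For each cut-vertex $y\in N_G(x)$, I would define $W_y$ to be the union of those components of $G-y$ that do not contain $x$; Claim~\ref{cl2.2} then gives $|W_y|\geq \delta(G)$. The main step is to verify that the sets $\{x\}$, $N_G(x)$, and the sets $W_y$ (taken over all cut-vertex neighbors $y$ of $x$) are pairwise disjoint. Disjointness from $\{x\}$ is immediate, and for each $z\in N_G(x)\setminus\{y\}$ the edge $xz$ survives in $G-y$, placing $z$ in the same component as $x$ and hence outside $W_y$. The nontrivial piece is $W_{y_1}\cap W_{y_2}=\emptyset$ for distinct cut-vertex neighbors $y_1,y_2$ of $x$: if some $v$ lay in both, then every $v$-$x$ path in $G$ would have to pass through both $y_1$ and $y_2$, but using the edges $xy_1$ and $xy_2$, any $v$-$y_1$ path avoiding $y_2$ could be extended by $y_1x$ to a $v$-$x$ path avoiding $y_2$; this forces every $v$-$y_1$ path to meet $y_2$ (and, symmetrically, every $v$-$y_2$ path to meet $y_1$). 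Taking a shortest $v$-$y_1$ path $v\to\cdots\to y_2\to\cdots\to y_1$ and applying the symmetric conclusion to its prefix $v\to\cdots\to y_2$ (which cannot contain $y_1$, since $y_1$ first appears at the very end of the full path) yields a contradiction.

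Once this disjointness is in hand, the count
\[
n\;\geq\;1+|N_G(x)|+\sum_{y}|W_y|\;\geq\;1+\delta(G)+m\,\delta(G)
\]
combined with $\delta(G)^2\geq n$ forces $m<\delta(G)-1$, and integrality gives $m\leq \delta(G)-2$, completing the first assertion. For the ``in particular'' statement, pick any $x\in X_B$; since $x$ is not a cut-vertex of $G$, the block $B$ is the unique block containing $x$, so $N_G(x)\subseteq V(B)\setminus\{x\}$ and hence $N_G(x)\cap X\subseteq X_B\setminus\{x\}$. The first part then yields $|X_B|\geq 1+|N_G(x)\cap X|\geq 3$.

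The hardest step I expect is the pairwise disjointness $W_{y_1}\cap W_{y_2}=\emptyset$; the remainder is routine counting, using only Claim~\ref{cl2.2} and the hypothesis $\delta(G)\geq \sqrt{n}$.
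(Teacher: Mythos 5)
Your proposal is correct and follows essentially the same approach as the paper: use Claim~\ref{cl2.2} to bound the size of the components of $G-y$ away from $x$ for each cut-vertex neighbor $y$, establish that these sets together with $N_G(x)\cup\{x\}$ are pairwise disjoint, and derive the bound by counting against $n\leq\delta(G)^2$. The only cosmetic differences are that you bound $m=|N_G(x)\setminus X|$ directly rather than arguing by contradiction from $|N_G(x)\cap X|\leq 1$, and that you spell out the disjointness argument which the paper merely asserts.
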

\begin{proof}
Suppose that $|N_{G}(x)\cap X|\leq 1$.  
For each vertex $y\in N_{G}(x)-X$, since $y$ is a cut-vertex of $G$, there exists a component $C_{y}$ of $G-y$ such that $x\not\in V(C_{y})$.
By Claim~\ref{cl2.2}, $|V(C_{y})|\geq \delta (G)$.
Futhermore, for distinct vertices $y_{1},y_{2}\in N_{G}(x)-X$, we have $V(C_{y_{1}})\cap (V(C_{y_{2}})\cup N_{G}(x))=\emptyset$.
Hence
\begin{align*}
	n &\geq |N_{G}(x)\cup \{x\}|+\sum _{y\in N_{G}(x)-X}|V(C_{y})|\\ 
		&\geq (\delta (G)+1)+|N_G(x)-X|\delta(G)\\ 
		&\geq (\delta(G)+1)+(\delta (G)-1)\delta (G)\\
		&=\delta (G)^{2}+1\\ 
		&\geq n+1,
\end{align*} 
which is a contradiction.
\end{proof}

\begin{claim}
\label{cl2.4}
Let $B$ be a block of $G$ with $X_{B}\not=\emptyset $, and let $x\in V(B)$ be a cut-vertex of $G$.
Then there exists a block $C$ of $B-x$ with $X_{B}\subseteq V(C)$.
In particular, if $B$ is an end-block of $G$, then $B-x$ is $2$-connected.
\end{claim}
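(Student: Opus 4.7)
My plan is to suppose for contradiction that $X_B$ is not contained in a single block of $B-x$, and then to use the standing assumption $|V(B)|\le 2\delta(G)-1$ (from the paragraph preceding the claim) to force $|V(B)|\ge 2\delta(G)$.

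By Claim~\ref{cl2.3} we have $|X_B|\ge 3$, so $B$ is $2$-connected and $B-x$ is connected. If $X_B$ is not contained in any single block of $B-x$, choose $y_1,y_2\in X_B$ that lie in no common block of $B-x$. The standard characterization of blocks (two distinct vertices share a block of a graph $H$ iff no third vertex of $H$ separates them) yields a cut-vertex $z$ of $B-x$ with $z\notin\{y_1,y_2\}$ such that $y_1$ and $y_2$ lie in distinct components of $(B-x)-z$. Because $B$ is $2$-connected and $B-\{x,z\}$ is disconnected, $\{x,z\}$ is a $2$-cut of $B$. Let $C_i$ be the component of $B-\{x,z\}$ containing $y_i$. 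Since $y_i\in X_B$ is not a cut-vertex of $G$, we have $N_G(y_i)=N_B(y_i)\subseteq V(C_i)\cup\{x,z\}$, whence $\delta(G)\le d_G(y_i)\le |V(C_i)|+1$ and so $|V(C_i)|\ge\delta(G)-1$. As $V(C_1)$, $V(C_2)$, and $\{x,z\}$ are pairwise disjoint subsets of $V(B)$,
\[
|V(B)|\;\ge\;|V(C_1)|+|V(C_2)|+2\;\ge\;2\delta(G),
\]
contradicting $|V(B)|\le 2\delta(G)-1$. Hence some block $C$ of $B-x$ contains all of $X_B$.

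For the ``in particular'' clause, when $B$ is an end-block of $G$, the vertex $x$ is the only cut-vertex of $G$ in $V(B)$, so $X_B=V(B-x)$. The main conclusion then forces all of $V(B-x)$ into one block, i.e.\ $B-x$ \emph{is} a single block; Claim~\ref{cl2.2} gives $|V(B-x)|\ge\delta(G)\ge 3$, so this block is $2$-connected.

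The one delicate step is the first one: producing a separator $z$ of $B-x$ distinct from both $y_1$ and $y_2$. This rests on the standard block-decomposition fact cited above (equivalently, one can verify it via the block-cut-vertex tree of $B-x$, checking that the shortest path between a block containing $y_1$ and a block containing $y_2$ must contain an internal cut-vertex node different from $y_1,y_2$, else $y_1,y_2$ would share a block). Once $z$ is in hand, the argument is a short degree count pitted against $|V(B)|\le 2\delta(G)-1$.
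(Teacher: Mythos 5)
Your proof is correct and follows essentially the same route as the paper: both assume $y_1,y_2\in X_B$ lie in no common block of $B-x$ and derive $|V(B)|\ge 2\delta(G)$, contradicting the standing bound $|V(B)|\le 2\delta(G)-1$. The only difference is bookkeeping: the paper observes directly that $|N_{B-x}(y_1)\cap N_{B-x}(y_2)|\le 1$ (two common neighbours would put $y_1,y_2$ on a $4$-cycle, hence in a common block) and counts $|N_{B-x}(y_1)\cup N_{B-x}(y_2)\cup\{y_1,y_2\}|$, whereas you make the separating cut-vertex $z$ explicit and count the two components of $B-\{x,z\}$ --- equivalent degree counts.
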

\begin{proof}
For the moment, we show that any two vertices in $X_{B}$ belong to a common block of $B-x$.
By way of contradiction, we suppose that there are distinct vertices $y_{1},y_{2}\in X_{B}$ such that no block of $B-x$ contains both $y_{1}$ and $y_{2}$.
In particular, $y_{1}y_{2}\not\in E(G)$.
Then $|N_{B-x}(y_{1})\cap N_{B-x}(y_{2})|\leq 1$, and hence $|N_{B-x}(y_{1})\cup N_{B-x}(y_{2})\cup \{y_{1},y_{2}\}|=|N_{B-x}(y_{1})|+|N_{B-x}(y_{2})|-|N_{B-x}(y_{1})\cap N_{B-x}(y_{2})|+2\geq 2(\delta (G)-1)+1$.
It follows that
$$
|V(B)-\{x\}|\geq |N_{B-x}(y_{1})\cup N_{B-x}(y_{2})\cup \{y_{1},y_{2}\}|\geq 2\delta (G)-1,
$$
and hence $|V(B)|\geq 2\delta (G)$, which contradicts the assumption that every block of $G$ has order at most $2\delta (G)-1$.
Thus any two vertices in $X_{B}$ belong to a common block of $B-x$.
This together with the definition of a block implies that a block $C$ of $B-x$ satisfies $X_{B}\subseteq V(C)$.
\end{proof}

Fix an end-block $B_{0}$ of $G$.
Then we can regard the block-cut-vertex graph $T$ of $G$ as a rooted tree with the root $B_{0}$.
For a block $B$ of $G$, let $G(B)$ denote the subgraph which consists of $B$ and the descendant blocks of $B$ with respect to $T$ (i.e., $G(B)$ is the graph formed by the union of all blocks of $G$ contained in the rooted subtree of $T$ with the root $B$).
A $2$-proper partition $\P$ of a subgraph of $G$ is {\it extendable} if $|P|\geq \delta (G)$ for every $P\in \P$.

\begin{claim}
\label{cl2.5}
Let $B^{*}$ be a block of $G$ with $B^{*}\not=B_{0}$, and let $u\in V(B^{*})$ be the parent of $B^{*}$ with respect to $T$.
Then $G(B^{*})-u$ has an extendable $2$-proper partition.
Furthermore, if $X_{B^{*}}\not=\emptyset $, then $G(B^{*})$ has an extendable $2$-proper partition.
\end{claim}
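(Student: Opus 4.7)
The plan is to prove Claim~\ref{cl2.5} by induction on the number of blocks in the subtree of $T$ rooted at $B^*$. In the base case $B^*$ is a leaf of $T$, so $B^*$ is an end-block of $G$ other than $B_0$, $G(B^*) = B^*$, and $X_{B^*} = V(B^*) \setminus \{u\}$; Claims~\ref{cl2.2} and~\ref{cl2.4} immediately furnish $\{V(B^*) \setminus \{u\}\}$ and $\{V(B^*)\}$ as extendable $2$-proper partitions of $G(B^*) - u$ and $G(B^*)$, since both sets are $2$-connected and $|V(B^*)| \geq \delta(G) + 1$.

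For the inductive step, let $v_1, \ldots, v_s$ be the child cut-vertices of $B^*$ in $T$ and, for each $i$, let $B_{i,1}, \ldots, B_{i, r_i}$ be the child blocks of $v_i$. By induction each $B_{i,j}$ yields an extendable $2$-proper partition of $G(B_{i,j}) - v_i$, and whenever $X_{B_{i,j}} \neq \emptyset$ also one of $G(B_{i,j})$ placing $v_i$ into a part of size $\geq \delta(G)$. Call $v_i$ \emph{absorbable} when some $X_{B_{i,j}}$ is non-empty, which lets us route $v_i$ into a child's partition. Using Claim~\ref{cl2.3} together with a block-tree cycle argument based on Claim~\ref{cl2.4} one obtains: (a) if $X_{B^*} = \emptyset$, every $v_i$ is absorbable, since its $\geq 2$ $X$-neighbors from Claim~\ref{cl2.3} cannot lie inside $B^*$; and (b) if $X_{B^*} \neq \emptyset$ and $C$ is the block of $B^*-u$ containing $X_{B^*}$ given by Claim~\ref{cl2.4}, then every $v_i \notin V(C)$ is absorbable, because two neighbors $x, x' \in X_{B^*}$ of $v_i$ would, via the edges $v_i x$ and $v_i x'$, force a cycle $C - x - D_1 - v_i - D_2 - x' - C$ in the block tree of $B^*-u$, contradicting that it is a tree. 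The partitions are then assembled: for $f^+(B^*)$ (requiring $X_{B^*} \neq \emptyset$), take top part $V(B^*)$, which is $2$-connected and of size $\geq \delta(G) + 1$, and combine it with the $v_i$-free child partitions so no $v_i$ is absorbed; for $f(B^*)$ with $X_{B^*} = \emptyset$, absorb every $v_i$ via (a) and use no top part; for $f(B^*)$ with $X_{B^*} \neq \emptyset$, take top part $V(C)$, which is $2$-connected because $|V(C)| \geq |X_{B^*}| \geq 3$ by Claim~\ref{cl2.3}, and absorb exactly the $v_i \notin V(C)$ by (b).

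The main obstacle is verifying that the top part $V(C)$ in the last construction has size at least $\delta(G)$. If some $x \in X_{B^*}$ is not a cut-vertex of $B^*-u$, then all of its $\geq \delta(G) - 1$ neighbors in $B^*-u$ lie inside $V(C)$, so $|V(C)| \geq \delta(G)$. Otherwise every $x \in X_{B^*}$ is a cut-vertex of $B^*-u$, so each $x$ has at least $\delta(G) - |V(C)|$ neighbors in $V(B^*-u) \setminus V(C)$; combined with the block-tree cycle argument (each vertex of $V(B^*-u) \setminus V(C)$ has at most one $X_{B^*}$-neighbor) this gives $|V(B^*-u) \setminus V(C)| \geq |X_{B^*}|(\delta(G) - |V(C)|)$. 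Plugging in the bound $|V(B^*)| \leq 2\delta(G)-1$ from Claim~\ref{cl2.1} and observing that each cut-vertex of $G$ in $V(B^*) \setminus \{u\}$ lies in a pairwise-disjoint subtree contributing at least $\delta(G)$ further vertices to $n$ (the end-block leaves of those subtrees have $\geq \delta(G)+1$ vertices by Claim~\ref{cl2.2}, and similarly for the ancestor side through $B_0$), a direct calculation in the worst case $|V(C)| = \delta(G) - 1$ forces $n \geq \delta(G)^2 + \delta(G) + |X_{B^*}| > \delta(G)^2$, contradicting $\delta(G) \geq \sqrt{n}$. Hence $|V(C)| \geq \delta(G)$ in every case, and the construction delivers the desired extendable $2$-proper partitions.
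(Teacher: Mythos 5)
Your proposal is correct and follows essentially the same route as the paper: induction over the rooted block tree (you use the number of blocks where the paper uses the height, but these are interchangeable), the same base case via Claims~\ref{cl2.2} and~\ref{cl2.4}, the same notion of ``absorbable'' cut-vertices (the paper's choice of a block $B_x$ with $X_{B_x}\ne\emptyset$), and the same counting argument to show the central block $C$ of $B^*-u$ containing $X_{B^*}$ has order at least $\delta(G)$ (your version throws in the extra component of $G-u$ on the $B_0$ side, giving a marginally larger contradiction $n\ge\delta^2+\delta+|X_{B^*}|$ versus the paper's $n\ge\delta^2+|X_{B^*}|-1$, but the idea is identical). One small point worth tightening: you assert ``a direct calculation in the worst case $|V(C)|=\delta(G)-1$'' without noting that the resulting lower bound on $n$ is a decreasing function of $|V(C)|$ (so $|V(C)|=\delta(G)-1$ is indeed the extremal value); the paper handles all values $|V(C)|\le\delta(G)-1$ uniformly in a single chain of inequalities. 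This is a presentational gap rather than a mathematical one.
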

\begin{proof}
We proceed by induction on the height $h$ of the block-cut-vertex graph of $G(B^{*})$ with the root $B^{*}$.
If $h=0$, then $G(B^{*})~(=B^{*})$ is an end-block of $G$, and hence the desired conclusion holds by Claims~\ref{cl2.2} and \ref{cl2.4}.
Thus we may assume that $h\geq 2$ (i.e., $B^{*}$ has a child in $T$).
By the assumption of induction, for $x\in V(B^{*})-(X_{B^{*}}\cup \{u\})$ and $B\in \B_{x}-\{B^{*}\}$, $G(B)-x$ has an extendable $2$-proper partition $\P_{x,B}$.
For each $x\in V(B^{*})-(X_{B^{*}}\cup \{u\})$, let $\P_{x}=\bigcup _{B\in \B_{x}-\{B^{*}\}}\P_{x,B}$ and fix a block $B_{x}\in \B_{x}-\{B^{*}\}$ so that $X_{B_x}$ is not empty, if possible. 

Suppose that $X_{B^{*}}=\emptyset $.
Fix a vertex $x\in V(B^{*})-\{u\}$.
Then by Claim~\ref{cl2.3}, we may assume that $X_{B_{x}}\not=\emptyset $.
By the assumption of induction, $G(B_{x})$ has an extendable $2$-proper partition $\Q_{x}$.
This together with the assumption that $X_{B^{*}}=\emptyset $ implies that $\bigcup _{x\in V(B^{*})-\{u\}}((\P_{x}-\P_{x,B_x})\cup \Q_{x})$ is an extendable $2$-proper partition of $G(B^{*})-u$, as desired.
Thus we may assume that $X_{B^{*}}\not=\emptyset $.

\begin{subclaim}
\label{cl2.5.1}
There exists a block $A$ of $B^{*}-u$ such that
\begin{enumerate}[{\upshape(i)}]
\item
$X_{B^{*}}\subseteq V(A)$,
\item
$|V(A)|\geq \delta (G)$, and
\item
for $x\in V(B^{*})-(V(A)\cup \{u\})$, there exists a block $B'_{x}\in \B_{x}-\{B^{*}\}$ with $X_{B'_{x}}\not=\emptyset $.
\end{enumerate}
\end{subclaim}
\begin{proof}
By Claim~\ref{cl2.4}, there exists a block $A$ of $B^{*}-u$ satisfying (i).
We first show that $A$ satisfies (ii).
Suppose that $|V(A)|\leq \delta (G)-1$.
By the definition of a block, for any $x,x'\in X_{B^{*}}$ with $x\not=x'$, $N_{B^{*}-u}(x)\cap N_{B^{*}-u}(x')\subseteq V(A)$, and so $|(N_{B^{*}-u}(x)-V(A))\cup (N_{B^{*}-u}(x')-V(A))|=|N_{B^{*}-u}(x)-V(A)|+|N_{B^{*}-u}(x')-V(A)|$.
For each $x\in X_{B^{*}}$, since $x\in V(A)-N_{B^{*}-u}(x)$, $|N_{B^{*}-u}(x)-V(A)|\geq \delta (G)-1-(|V(A)|-1)$.
By Claim~\ref{cl2.2}, $|V(G(B_{x}))-\{x\}|\geq \delta (G)$ for every $x\in V(B^{*})-(X_{B^{*}}\cup \{u\})$.
Hence by Claim~\ref{cl2.3},
\begin{align*}
	n	&\geq  \left|(V(B^{*})-\{u\})\cup \left(\bigcup _{x\in V(B^{*})-(X_{B^{*}}\cup \{u\})}\left(V(G(B_{x}))-\{x\}\right)\right)\right| \\
		&= |V(B^{*})-\{u\}|+\sum _{x\in V(B^{*})-(X_{B^{*}}\cup \{u\})}|V(G(B_{x}))-\{x\}| \\
		&\geq  |V(B^{*})-\{u\}|+\delta (G)\left(|V(B^{*})-\{u\}|-|X_{B^{*}}|\right) \\
		&= (\delta (G)+1)|V(B^{*})-\{u\}|-\delta (G)|X_{B^{*}}| \\
		&\geq  (\delta (G)+1) \left|V(A)\cup \left(\bigcup _{x\in X_{B^{*}}}\left(N_{B^{*}-u}(x)-V(A)\right)\right)\right|-\delta (G)|X_{B^{*}}| \\
		&= (\delta (G)+1)\left(|V(A)|+\sum _{x\in X_{B^{*}}}\left|N_{B^{*}-u}(x)-V(A)\right|\right)-\delta (G)|X_{B^{*}}| \\
		&\geq  (\delta (G)+1)\left(|V(A)|+\sum _{x\in X_{B^{*}}}\left(\delta (G)-1-(|V(A)|-1)\right)\right)-\delta (G)|X_{B^{*}}| \\
		&= (\delta (G)+1)(|V(A)|+|X_{B^{*}}|(\delta (G)-|V(A)|))-\delta (G)|X_{B^{*}}| \\
		&= |X_{B^{*}}|\delta (G)^{2}-|V(A)|(\delta (G)+1)(|X_{B^{*}}|-1) \\
		&\geq  |X_{B^{*}}|\delta (G)^{2}-(\delta (G)-1)(\delta (G)+1)(|X_{B^{*}}|-1) \\
		&= \delta (G)^{2}+|X_{B^{*}}|-1 \\
		&\geq  n+3-1,
\end{align*}
which is a contradiction.
Thus $|V(A)|\geq \delta (G)$.

We next check that $A$ satisfies (iii).
Let $x\in V(B^{*})-(V(A)\cup \{u\})$.
Since $A$ is a block of $B^{*}-u$ and satisfies (i), $|N_{G}(x)\cap X_{B^{*}}|\leq 1$.
This together with Claim~\ref{cl2.3} implies that there exists a block $B'_{x}\in \B_{x}-\{B^{*}\}$ with $X_{B'_{x}}\not=\emptyset $.
\end{proof}

Let $A$ and $B'_{x}\in \B_{x}-\{B^{*}\}~(x\in V(B^{*})-(V(A)\cup \{u\}))$ be as in Subclaim~\ref{cl2.5.1}.
By the assumption of induction, for $x\in V(B^{*})-(V(A)\cup \{u\})$, $G(B'_{x})$ has an extendable $2$-proper partition $\Q'_{x}$.
Then
$$
\{V(A)\}\cup \left(\bigcup _{x\in V(B^{*})-(V(A)\cup \{u\})}((\P_{x}-\P_{x,B'_{x}})\cup \Q'_{x})\right)\cup \left(\bigcup _{x\in V(A)-X_{B^{*}}}\P_{x}\right)
$$
is an extendable $2$-proper partition of $G(B^{*})-u$.

Since $N_{G}(x)\cup \{x\}\subseteq V(B^{*})$ for $x\in X_{B^{*}}$, $|V(B^{*})|\geq \delta (G)+1$, and hence $\{V(B^{*})\}\cup (\bigcup _{x\in V(B^{*})-(X_{B^{*}}\cup \{u\})}\P_{x})$ is an extendable $2$-proper partition of $G(B^{*})$.
\end{proof}

By Claim~\ref{cl2.5}, $G-V(B_{0})$ has an extendable $2$-proper partition $\P_{0}$.
Hence $\P=\{V(B_{0})\}\cup \P_{0}$ is a $2$-proper partition of $G$.
Furthermore, since $|V(B_{0})|\geq \delta (G)+1$ by Claim~\ref{cl2.2}, $n=\sum _{P\in \P}|P|=|V(B_{0})|+\sum _{P\in \P_{0}}|P|\geq (\delta (G)+1)+(|\P|-1)\delta (G)=|\P|\delta (G)+1$, and hence $|\P|\leq (n-1)/\delta (G)$.

This completes the proof of Theorem~\ref{thm1}.
\qed


\section{$k$-Proper Partitions}\label{sec3}

Let $e(k,n)$ be the maximum number of edges in a graph of order $n$ with no $k$-connected subgraph.
Define $d(k)$ to be $$\sup\left\{ \frac{2e(k,n)+2}{n} : n > k\right\}$$ and
$$\gamma = \sup\{ d(k)/(k-1) : k \geq 2 \}.$$

Recall that the average degree of a graph $G$ of order $n$ with $e(G)$ edges is $\frac{2e(G)}{n}$.  This leads to the following useful observation.   

\begin{obs}\label{obs:gamma}
If $G$ is a graph with average degree at least $\gamma(k-1)$, then $G$ contains a $k$-connected subgraph.
\end{obs}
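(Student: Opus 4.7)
The proof plan is essentially an unpacking of the definitions: the quantity $\gamma$ is rigged precisely so that the hypothesis forces the edge count of $G$ to exceed the extremal threshold $e(k,n)$.

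Let $n=|V(G)|$, and assume first that $n>k$ (the case $n\le k$ is addressed at the end). From the definition of $\gamma$ we have $\gamma \ge d(k)/(k-1)$, and from the definition of $d(k)$ we have
\[
d(k) \;\ge\; \frac{2e(k,n)+2}{n}.
\]
Chaining these with the hypothesis on the average degree of $G$ gives
\[
\frac{2e(G)}{n} \;\ge\; \gamma(k-1) \;\ge\; d(k) \;\ge\; \frac{2e(k,n)+2}{n},
\]
so that $e(G) \ge e(k,n)+1 > e(k,n)$. By the very definition of $e(k,n)$ as the maximum number of edges in an $n$-vertex graph with no $k$-connected subgraph, $G$ cannot itself be $k$-connected-free, so $G$ contains a $k$-connected subgraph.

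For the small-order case $n\le k$, note that every $k$-connected graph has at least $k+1$ vertices, so $G$ can contain a $k$-connected subgraph only when $n\ge k+1$; but in this regime the average degree of $G$ is at most $n-1\le k-1$, while $\gamma(k-1)\ge 2(k-1)\ge k-1$ with equality only when $k=2$, in which case an easy direct check (a graph on at most $2$ vertices with average degree $\ge 2$ is impossible) rules out the case. Hence the hypothesis is vacuous for $n\le k$, and we are done.

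There is no real obstacle here: the whole content of the observation is the choice of $\gamma$, and the proof is a one-line chain of inequalities together with a sanity check on the boundary case. The genuine mathematical work (showing $\gamma<\infty$, i.e.\ that $d(k)=O(k)$) is done by Mader's theorem, which is invoked implicitly through the finiteness of $\gamma$ rather than proved here.
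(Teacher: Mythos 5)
Your proof is correct and follows exactly the route the paper has in mind: the paper states Observation~\ref{obs:gamma} without proof, relying on the reader to make the one-line chain $\frac{2e(G)}{n}\ge\gamma(k-1)\ge d(k)\ge\frac{2e(k,n)+2}{n}$ and conclude $e(G)>e(k,n)$. You unpack this correctly, and you correctly notice that the case $n\le k$ must be checked separately since $e(k,n)$ is only defined for $n>k$. One small imprecision: in the boundary case you write that $2(k-1)\ge k-1$ ``with equality only when $k=2$,'' but in fact $2(k-1)-(k-1)=k-1\ge 1>0$ for every $k\ge 2$, so the inequality is always strict and the extra $k=2$ sanity check is unnecessary (though harmless). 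Also, the bound $\gamma\ge 2$ you use is correct since $d(2)=2$ (as $e(2,n)=n-1$), but even $\gamma>1$ would have sufficed to make the hypothesis vacuous for $n\le k$.
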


In \cite{Mader}, Mader proved that $3 \leq \gamma \leq 4$ and constructed a graph of order $n$ with $\left(\frac{3}{2}k-2\right)(n-k+1)$ edges and without $k$-connected subgraphs.  This led him to make the following conjecture.  

\begin{con}\label{conj:mader}
	If $k\ge 2$, then $e(k,n) \leq \left(\frac{3}{2}k-2\right)(n-k+1)$.  Consequently, $d(k) \leq 3(k-1)$ and $\gamma = 3$.
\end{con}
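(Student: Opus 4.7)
This is Mader's conjecture, a well-known and longstanding open problem, so I can only sketch the shape of an attack rather than an actual proof; I will describe the most natural induction approach and flag where it founders.

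The plan is to fix $k\geq 2$ and argue by induction on $n$. Let $G$ be an edge-extremal $n$-vertex graph with no $k$-connected subgraph, and suppose for contradiction that $e(G) > (\tfrac{3}{2}k-2)(n-k+1)$. A first, easy reduction handles low-degree vertices: if some $v\in V(G)$ has $d_G(v) \leq \tfrac{3}{2}k-2$, then
\[
 e(G-v) \;\geq\; e(G) - \bigl(\tfrac{3}{2}k-2\bigr) \;>\; \bigl(\tfrac{3}{2}k-2\bigr)\bigl((n-1)-k+1\bigr),
\]
and induction gives a $k$-connected subgraph in $G-v$, a contradiction. Hence we may assume $\delta(G)\geq \lceil \tfrac{3}{2}k-1 \rceil$. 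In particular $|V(G)|\geq \tfrac{3}{2}k$ and $G$ is quite dense locally.

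The next and central step is to exploit the fact that $G$ itself is not $k$-connected, so there is a vertex cut $S$ with $|S|=s<k$. Decompose $V(G)=V_1\cup V_2$ with $V_1\cap V_2=S$ and no edges between $V_1\setminus S$ and $V_2\setminus S$, and let $G_i = G[V_i]$. Counting $e(G)=e(G_1)+e(G_2)-e(G[S])$ and applying the inductive bound to each $G_i$ (which has no $k$-connected subgraph either), one aims to derive
\[
 e(G) \;\leq\; \bigl(\tfrac{3}{2}k-2\bigr)\bigl(|V_1|-k+1\bigr)+\bigl(\tfrac{3}{2}k-2\bigr)\bigl(|V_2|-k+1\bigr)-e(G[S]),
\]
and then use $|V_1|+|V_2| = n+s$ to recover the desired bound $(\tfrac{3}{2}k-2)(n-k+1)$. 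For this to close, one needs that the ``deficit'' $(\tfrac{3}{2}k-2)(s-k+1)+e(G[S])$ is nonnegative; since $s<k$ this quantity is a priori negative by $(\tfrac{3}{2}k-2)(k-1-s)$, so one must compensate by showing the cut $S$ and its neighborhoods carry at least this many ``extra'' edges into $V_1\setminus S$ or $V_2\setminus S$.

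The serious obstacle, and the reason this conjecture has resisted proof for decades, is precisely this compensation. Mader's extremal examples consist of vertex-disjoint copies of $K_{3k/2-1}$ glued along small sets, so near-extremal instances have many small cuts with tight edge counts, and the naive cut-splitting argument above loses exactly a constant factor (this is essentially how one proves the weaker $\gamma\leq 4$). A real attack would need a sharper structural dichotomy: either $G$ contains a small vertex set whose removal drops the edge count by noticeably more than $(\tfrac{3}{2}k-2)$ per vertex (allowing a strengthened induction), or $G$ already contains a subgraph of very high minimum degree to which one can apply a $k$-connectivity extraction lemma in the spirit of Mader's original proof of $\gamma\leq 4$. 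Absent a genuinely new idea at this dichotomy -- e.g., an entropy or discharging argument that redistributes the edge count across cuts -- the consequence $\gamma=3$ (and therefore the clean $d(k)\leq 3(k-1)$ invoked via Observation~\ref{obs:gamma}) seems out of reach by the induction sketched here.
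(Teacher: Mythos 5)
This statement is a conjecture, not a theorem: the paper labels it Conjecture~\ref{conj:mader}, attributes it to Mader, and does not prove it, so there is no ``paper's own proof'' to compare against. You correctly recognize it as open, and honestly flagging the obstruction rather than manufacturing a fake proof is the right move; the constant-factor loss you identify in the cut-splitting induction is indeed exactly what limits the elementary argument to Mader's weaker bound. Two points worth adding for context. First, the current record is better than $\gamma\le 4$: Yuster proved $e(k,n)\le\frac{193}{120}(k-1)(n-k+1)$ whenever $n\ge\frac{9}{4}(k-1)$, which effectively gives $\gamma\le\frac{193}{60}\approx 3.22$, and it is this value (via Theorem~\ref{thm:Yuster} and Corollary~\ref{cor:Yuster}), not the conjectured $\gamma=3$, that the paper substitutes into the general Theorem~\ref{thm3} to obtain the constant $\frac{11}{3}\cdot\frac{193}{60}=\frac{2123}{180}$ in Theorem~\ref{thm2}; if Conjecture~\ref{conj:mader} were ever proved, the corollary following Theorem~\ref{thm3} would immediately give the cleaner constant $11$. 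Second, the clause $\gamma=3$ requires the matching lower bound $\gamma\ge 3$, which Mader supplied via his extremal construction with $(\frac{3}{2}k-2)(n-k+1)$ edges and no $k$-connected subgraph; your sketch addresses only the upper bound $\gamma\le 3$. Since you were not expected to resolve a decades-old open problem, there is nothing here to repair.
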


Note that Conjecture \ref{conj:mader} holds when $k=2$, as it is straightforward to show that $e(2,n)=n-1$.  The most significant progress towards Conjecture \ref{conj:mader} is due to Yuster \cite{Yuster}.

\begin{thm}\label{thm:Yuster}
	If $n \geq \frac{9}{4}(k-1)$, then $e(k,n) \leq \frac{193}{120}(k-1)(n-k+1)$.
\end{thm}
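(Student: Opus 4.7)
The plan is to prove the bound by induction on $n$, setting $f(n) := \frac{193}{120}(k-1)(n-k+1)$ and deriving a contradiction from the assumption that some graph $G$ on $n \geq \frac{9}{4}(k-1)$ vertices has $e(G) > f(n)$ yet contains no $k$-connected subgraph. Base cases cover values of $n$ just above $\frac{9}{4}(k-1)$, where one checks that $f(n)$ already exceeds the trivial cap $\binom{n}{2}$, making the statement vacuous.

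The inductive step splits on the minimum degree. In the low-degree case $\delta(G) \leq \frac{193}{120}(k-1)$, pick a vertex $v$ of minimum degree: then $G - v$ satisfies
\[
e(G-v) = e(G) - d(v) > f(n) - \tfrac{193}{120}(k-1) = f(n-1),
\]
so induction (provided $n-1$ remains above the threshold) yields a $k$-connected subgraph of $G - v$, which is also $k$-connected in $G$, a contradiction. Otherwise $\delta(G) > \frac{193}{120}(k-1)$. Since $G$ has no $k$-connected subgraph, $G$ itself is not $k$-connected, so there exist a separator $S$ with $|S| \leq k-1$ and a partition of $V(G) \setminus S$ into nonempty sets $A, B$ with no $A$--$B$ edges. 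The high minimum degree forces $|A \cup S|, |B \cup S| \geq \delta(G) + 1 > \frac{193}{120}(k-1) + 1$, and we may write
\begin{align*}
e(G) &= e(G[A \cup S]) + e(G[B \cup S]) - e(G[S]).
\end{align*}
If both pieces meet the inductive hypothesis $|\cdot| \geq \frac{9}{4}(k-1)$, then induction gives $e(G[A \cup S]) + e(G[B \cup S]) \leq f(|A|+|S|) + f(|B|+|S|)$; a short computation using $|A|+|B|+|S|=n$ and $|S|\leq k-1$ simplifies this to $f(n) + \frac{193}{120}(k-1)(|S|-k+1) \leq f(n)$, contradicting $e(G) > f(n)$.

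The main obstacle is the remaining regime, in which one of the pieces---say $A \cup S$---has fewer than $\frac{9}{4}(k-1)$ vertices, falling outside the inductive range. For that small piece one bounds $e(G[A \cup S])$ directly by $\binom{|A \cup S|}{2}$, or more sharply by combining this with the minimum-degree information inherited from $G$, while $G[B \cup S]$ is still large enough for induction. The specific constant $\frac{193}{120}$ then emerges from balancing the resulting estimate against $f(n)$ over all permissible triples $(|A|,|B|,|S|)$ with $|A \cup S|$ in the awkward window $[\delta(G)+1,\ \frac{9}{4}(k-1))$. Pinning down the worst-case configuration in this window, and verifying that $\frac{193}{120}$ clears the bound uniformly, is where I expect the main technical work---together with the integrality subtleties regarding $\delta(G)$, $|S|$, and the gap between $\frac{193}{120}$ and $\frac{9}{4}$---to concentrate.
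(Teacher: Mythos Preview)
The paper does not prove this theorem. Theorem~\ref{thm:Yuster} is quoted from Yuster's paper \cite{Yuster} and used as a black box (via Corollary~\ref{cor:Yuster}) in the proof of Theorem~\ref{thm2}; the authors give no argument for it themselves. So there is no ``paper's own proof'' to compare your proposal against.

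As for the proposal itself: what you have written is a plan, not a proof, and you say as much in the final paragraph. The skeleton---induction on $n$, delete a vertex of minimum degree when $\delta(G)\le \frac{193}{120}(k-1)$, otherwise split across a separator of size at most $k-1$---is indeed the standard Mader-type framework for results of this shape, and your computation in the ``both pieces large'' case is correct. But the substance of Yuster's theorem lies entirely in the regime you defer: when one side $A\cup S$ has fewer than $\frac{9}{4}(k-1)$ vertices and so falls outside the inductive range. Bounding that piece by $\binom{|A\cup S|}{2}$ alone is not enough to recover the constant $\frac{193}{120}$; one needs a sharper structural argument exploiting the minimum degree inside $A\cup S$ and the precise numerics of the window $[\delta(G)+1,\,\frac{9}{4}(k-1))$. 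You have correctly located where the difficulty is, but have not resolved it, so as it stands the proposal is an outline rather than a proof. If your goal is only to follow the present paper, you may simply cite Yuster's result as the authors do.
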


Note that Theorem \ref{thm:Yuster} requires $n \geq \frac{9}{4}(k-1)$, which means that we cannot immediately obtain a bound on $\gamma$.
The following corollary, however, shows that we can use this result in a manner similar to Observation \ref{obs:gamma}.

\begin{cor}\label{cor:Yuster}
Let $G$ be a graph of order $n$ with average degree $\overline{d}$.
Then $G$ contains a $\lfloor\frac{60\overline{d}}{193}\rfloor$-connected subgraph.
\end{cor}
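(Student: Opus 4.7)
The plan is a contrapositive application of Theorem~\ref{thm:Yuster}. Set $k^{*} = \lfloor 60\overline{d}/193\rfloor$; the cases $k^{*} \le 1$ are trivial (if $k^{*}=1$, then $\overline{d}\ge 193/60$, forcing $G$ to have at least one edge, which is itself a $1$-connected subgraph). For $k^{*}\ge 2$, I will argue by contradiction: assume that $G$ contains no $k^{*}$-connected subgraph. Then by the very definition of $e(\cdot,\cdot)$,
\[
|E(G)| \;\le\; e(k^{*},n).
\]

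Before invoking Theorem~\ref{thm:Yuster}, I must verify its hypothesis $n \ge \tfrac{9}{4}(k^{*}-1)$. This is automatic: every graph on $n$ vertices satisfies $\overline{d} \le n-1$, so the definition of $k^{*}$ yields $k^{*} \le 60(n-1)/193$. Since $\tfrac{9}{4}\cdot\tfrac{60}{193} = \tfrac{135}{193} < 1$, a one-line estimate then shows $\tfrac{9}{4}(k^{*}-1) \le n$ for all relevant $n$.

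With the hypothesis in place, Theorem~\ref{thm:Yuster} gives
\[
|E(G)| \;\le\; \tfrac{193}{120}(k^{*}-1)(n-k^{*}+1) \;\le\; \tfrac{193}{120}(k^{*}-1)n,
\]
whence $\overline{d} = 2|E(G)|/n \le \tfrac{193}{60}(k^{*}-1)$, or equivalently $60\overline{d}/193 \le k^{*}-1$. However, by the defining property of the floor function we also have $60\overline{d}/193 \ge k^{*}$, so $k^{*} \le k^{*}-1$, the desired contradiction.

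The ``hard part'' is essentially nonexistent here: the proof is purely algebraic bookkeeping around the floor function, together with the routine verification of the hypothesis $n \ge \tfrac{9}{4}(k^{*}-1)$. In this sense Corollary~\ref{cor:Yuster} is a corollary of Theorem~\ref{thm:Yuster} in the most literal way, packaging Yuster's edge bound as a statement about average degree.
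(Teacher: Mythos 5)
Your proof is correct and follows essentially the same route as the paper's: a contrapositive application of Theorem~\ref{thm:Yuster}, resting on the arithmetic fact that $\tfrac{9}{4}\cdot\tfrac{60}{193}=\tfrac{135}{193}<1$. The only (minor) difference is organizational: the paper invokes Theorem~\ref{thm:Yuster} in the case $n \ge \tfrac{9}{4}(k-1)$ and then disposes of the opposite case $n < \tfrac{9}{4}(k-1)$ by a separate direct contradiction ($n < \tfrac{9}{4}\cdot\tfrac{60}{193}\overline{d} < \tfrac{7}{10}\Delta(G) < n$), whereas you observe up front that $\overline{d}\le n-1$ forces $n\ge\tfrac{9}{4}(k^{*}-1)$ to hold automatically, so the second case never arises. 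Both versions use the same underlying estimate, and yours is arguably the tidier packaging.
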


\begin{proof}
Let $k = \lfloor\frac{60\overline{d}}{193}\rfloor$ and suppose that $G$ does not contain a $k$-connected subgraph.  If $n \geq \frac{9}{4}(k-1)$, then Theorem~\ref{thm:Yuster} implies
\[
\frac{1}{2}\overline{d} n = e(G) \leq \frac{193}{120}(k-1)(n-k+1) < \frac{193}{120} \left(\frac{60}{193}\overline{d}\right)n = \frac{1}{2} \overline{d} n.
\]
Thus, assume that $n < \frac{9}{4}(k-1)$.
This implies that
\[
n < \frac{9}{4}(k-1) < \frac{9}{4}\frac{60}{193}\overline{d} < \frac{7}{10}\Delta(G),
\]
a contradiction.
\end{proof}

Finally, prior to proving our main result, we require the following simple lemma, which we present without proof.  

\begin{lem}\label{lemma:kconn}
If $G$ is a graph of order $n\ge k+1$ such that $\delta(G)\ge\frac{n+k-2}{2}$, then $G$ is $k$-connected.  
\end{lem}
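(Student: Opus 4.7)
The plan is the classical contradiction argument using a hypothetical small vertex cut. Suppose $G$ is not $k$-connected. Since $n \geq k+1$, there must exist a separating set $S \subseteq V(G)$ with $s := |S| \leq k-1$ such that $G - S$ is disconnected. Partition $V(G) \setminus S$ into two non-empty sets $A$ and $B$ with no edges between them (where $A$ is the vertex set of some component of $G-S$ and $B$ is the rest).

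Next I would use the degree condition on vertices of $A$ and $B$ separately. For any $v \in A$, we have $N(v) \subseteq (A \setminus \{v\}) \cup S$, so
\[
\frac{n+k-2}{2} \leq \delta(G) \leq d(v) \leq |A| - 1 + s,
\]
which rearranges to $|A| \geq \tfrac{n+k-2s}{2}$. Symmetrically $|B| \geq \tfrac{n+k-2s}{2}$.

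Finally I would combine these with the identity $|A| + |B| = n - s$ to obtain
\[
n - s \;=\; |A| + |B| \;\geq\; n + k - 2s,
\]
i.e.\ $s \geq k$, contradicting $s \leq k-1$. This closes the proof. There is no genuine obstacle here: the only thing to check is that the assumption $n \geq k+1$ rules out the degenerate case where $G$ fails to be $k$-connected simply because it has at most $k$ vertices, and the pigeonhole-style addition of the two size bounds does the rest.
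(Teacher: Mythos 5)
Your proof is correct, and it is the standard vertex-cut counting argument one would expect; the paper itself states this lemma without proof, calling it simple, so there is no authorial proof to compare against. One small point worth making explicit: under the paper's definition, ``not $k$-connected'' could also mean that deleting fewer than $k$ vertices leaves a graph with at most one vertex, which would happen only if $n \leq k$; your hypothesis $n \geq k+1$ rules this out, and you acknowledge this at the end, so the argument is complete.
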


We prove the following general result, and then show that we may adapt the proof to improve Theorem \ref{thm2}.

\begin{thm}\label{thm:fewerparts}\label{thm3}
Let $k \geq 2$ and $c \geq \frac{11}{3}$.
If $G$ is a graph of order $n$ with minimum degree $\delta$ with $\delta \geq \sqrt{c\gamma(k-1) n}$, then $G$ has a $k$-proper partition into at most $\left\lfloor\frac{c\gamma n}{\delta}\right\rfloor$ parts.
\end{thm}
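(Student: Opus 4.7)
Following the hint in the paper, the plan is to run the greedy algorithm: set $G_0 = G$ and, for $i \ge 1$, let $H_i$ be a maximum-order $k$-connected subgraph of $G_{i-1}$, let $P_i = V(H_i)$ and $G_i = G_{i-1} - V(H_i)$, continuing until either $G_i$ is empty or $G_i$ itself is $k$-connected (in which case $V(G_i)$ is taken as the final part). One must show the algorithm completes and produces at most $\lfloor c\gamma n/\delta\rfloor$ parts.

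The first key observation is that $H_i$, being a maximum (hence maximal) $k$-connected subgraph, satisfies the following: for any $v \in V(G_i)$, $|N_{G_{i-1}}(v) \cap V(H_i)| \le k-1$. Otherwise $G_{i-1}[V(H_i) \cup \{v\}]$ would itself be $k$-connected: for any set $S$ of fewer than $k$ vertices, either $v \in S$ and $H_i - (S \setminus \{v\})$ is connected since $|S\setminus\{v\}| < k-1$, or $v \notin S$ and $v$ retains at least $k - |S| \ge 1$ neighbors in the connected graph $H_i - S$. Iterating yields the slow-decay bound
\[
\delta(G_i) \;\ge\; \delta(G_{i-1}) - (k-1) \;\ge\; \delta - i(k-1),
\]
so by Observation~\ref{obs:gamma} a $k$-connected subgraph exists in $G_{i-1}$ whenever $\delta(G_{i-1}) \ge \gamma(k-1)$, and by Lemma~\ref{lemma:kconn} the algorithm can terminate with $G_{i-1}$ as a single $k$-connected part as soon as $\delta(G_{i-1}) \ge (|V(G_{i-1})| + k - 2)/2$.

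To bound the number $s$ of parts, write $p_i = |V(H_i)|$ so that $\sum p_i = n$. The crux is a per-step lower bound on $p_i$, obtained by exhibiting inside $G_{i-1}$ an induced subgraph $J$ satisfying the Lemma~\ref{lemma:kconn} balance $2\delta(J) \ge |V(J)| + k - 2$: then $J$ is $k$-connected, so by maximality $p_i \ge |V(J)|$. The natural construction is a ``shrink-to-dense'' routine inside $G_{i-1}$: iteratively remove a lowest-degree vertex and stop once the balance $2\delta \ge |V| + k - 2$ is first met. With careful bookkeeping -- exploiting the fact that each removal simultaneously decreases $|V|$ and the right-hand threshold -- this should produce $J$ with $|V(J)| \ge 2\delta(G_{i-1})/(c\gamma)$. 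Summing $p_i \ge 2\delta(G_{i-1})/(c\gamma)$ and using $\delta(G_{i-1}) \ge \delta - (i-1)(k-1)$ gives a quadratic inequality in $s$; the hypothesis $\delta^2 \ge c\gamma(k-1)n$, together with the assumption $c \ge 11/3$, is exactly what makes the discriminant cooperate to yield $s \le \lfloor c\gamma n/\delta\rfloor$ (while the naive bound $s \le \delta/(k-1)$ from the min-degree decay rules out the irrelevant root of the quadratic).

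The main obstacle is this size bound on $|V(H_i)|$: producing a $k$-connected subgraph of quantitatively large order in every $G_{i-1}$ arising in the algorithm. Observation~\ref{obs:gamma} and Corollary~\ref{cor:Yuster} guarantee \emph{some} $k$-connected subgraph but give no useful lower bound on its order, so a dedicated argument is needed; the specific constant $11/3$ is almost certainly the smallest $c$ that balances the per-step size bound against the per-iteration loss of $k-1$ in minimum degree.
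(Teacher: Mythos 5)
The high-level framework you outline---the greedy algorithm, the $\delta_{i+1} \ge \delta_i - (k-1)$ decay via Menger, terminating via Lemma~\ref{lemma:kconn}, and the need for a per-step lower bound on $|V(H_i)|$---all matches the paper. But you stop short at precisely the step that makes the proof go through, and the route you sketch to fill it does not work.

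You write that ``Observation~\ref{obs:gamma} and Corollary~\ref{cor:Yuster} guarantee \emph{some} $k$-connected subgraph but give no useful lower bound on its order.'' This is the point you missed: applied with a \emph{larger} connectivity parameter, Observation~\ref{obs:gamma} does yield a size bound essentially for free. Since $G_i$ has average degree at least $\delta_i$, Observation~\ref{obs:gamma} produces a $\bigl(\lfloor \delta_i/\gamma\rfloor+1\bigr)$-connected subgraph $H_i$, which (i) is $k$-connected as soon as $\delta_i \ge \gamma(k-1)$, and (ii) has at least $\lfloor \delta_i/\gamma\rfloor+1 > \delta_i/\gamma$ vertices simply because a $\kappa$-connected graph has more than $\kappa$ vertices. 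So $n_{i+1} < n_i - \delta_i/\gamma$, and the rest of the paper's argument is arithmetic on the resulting recurrences for $\delta_i$ and $n_i$. Your replacement---a ``shrink-to-dense'' routine that deletes a lowest-degree vertex until the balance $2\delta \ge |V| + k - 2$ holds---is not substantiated and appears to fail: deleting a minimum-degree vertex decreases $|V|$ by $1$ while $\delta$ can drop by $1$ as well, so the quantity $2\delta - |V|$ need not increase and the process has no reason to ever reach the Lemma~\ref{lemma:kconn} threshold before the graph degenerates. The claimed inequality $|V(J)| \ge 2\delta(G_{i-1})/(c\gamma)$ is asserted, not proved, and it is the entire content of the theorem; without it your quadratic-in-$s$ bookkeeping has nothing to sum. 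In short, the proposal correctly identifies where the difficulty lies but substitutes an unsupported heuristic for the paper's clean trick of trading a higher connectivity target for a guaranteed order.
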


\begin{proof}
Since $n > \delta \geq \sqrt{c\gamma(k-1)n}$, we have $n^2 > c\gamma(k-1)n$ and hence $n > c\gamma(k-1) \geq 11(k-1)$.
Therefore, by Lemma \ref{lemma:kconn}, it follows that 
\[
	\delta < \frac{n+k-2}{2} < \frac{n+(k-1)}{2} \leq \frac{n+\frac{1}{11}n}{2} \leq \frac{6}{11}n.
\]

Let $G_0 = G$, $\delta_0 = \delta$, and $n_0 = |V(G)|$.
We will build a sequence of graphs $G_i$ of order $n_i$ and minimum degree $\delta_i$ by selecting a $k$-connected subgraph $H_i$ of largest order from $G_i$ and assigning $G_{i+1} = G_i - V(H_i)$.
This process terminates when either $G_{i}$ is $k$-connected or $G_i$ does not contain a $k$-connected subgraph.
We claim the process terminates when $G_i$ is $k$-connected and $H_i = G_i$.

By Observation \ref{obs:gamma}, $G_i$ contains a $( \lfloor\frac{\delta_i}{\gamma}\rfloor+1 )$-connected subgraph $H_i$.
If $\frac{\delta_i}{\gamma} \geq k-1$, then $H_i$ is $k$-connected and has order at least $\lfloor\frac{\delta_i}{\gamma}\rfloor+1>\frac{\delta_i}{\gamma}$.  Since $H_i$ is a maximal $k$-connected subgraph in $G_i$, every vertex $v \in V(G_i) \setminus V(H_i)$ has at most $k-1$ edges to $H_i$ by a simple consequence of Menger's Theorem.
Therefore, we have $$\delta_{i+1} \geq \delta_i - (k-1)$$ and $$n_{i+1} =n_i-|H_i|<n_i - \delta_i/\gamma.$$
This gives us the estimates on $\delta_i$ and $n_i$ of
\[
	\delta_i \geq \delta - i(k-1),
\]
and
\begin{align*}
	n_i &\leq n - \sum_{j=0}^{i-1} \delta_j/\gamma
		\leq n  - \frac{1}{\gamma}\sum_{j=0}^{i-1}\left[\delta - j(k-1)\right]
		= n  - \frac{1}{\gamma}\left[ i\delta - (k-1)\binom{i}{2}\right].  
\end{align*}

Let $t = \left\lceil\frac{c\gamma n}{\delta}-4\right\rceil = \frac{c\gamma n}{\delta} - (4-x)$, where $x\in[0,1)$.
We claim that the process terminates with a $k$-proper partition at or before the $(t+1)^{\text{st}}$ iteration (that is, at or before the point of selecting a $k$-connected subgraph from $G_t$).  First, we have
\[
	\delta_{t-1} \geq \delta - (t-1)(k-1)
	> \delta - \left(\frac{c\gamma n}{\delta} - 4\right)(k-1)
	= \delta - \frac{c\gamma(k-1)n}{\delta} + 4(k-1).
\]
Note that $\delta^2 \geq  c\gamma(k-1) n$ and hence $\delta-\frac{c\gamma (k-1) n}{\delta} \geq 0$.
Therefore, 
\[\delta_{t-1} > 4(k-1) \geq \gamma(k-1) \quad\text{and}\quad \delta_t \geq 3(k-1).\]
As the bound on $\delta_i$ is a decreasing function of $i$, we have $\delta_i>4(k-1)$ for all $0\le i\le t-1$.  Thus each $G_i$ with $i<t$ contains a $k$-connected subgraph.
Next, consider $n_t$.
\begin{align*}
n_t &\leq n - \frac{1}{\gamma}\left[ t\delta - (k-1)\binom{t}{2}\right] \\
	&= n - \frac{1}{\gamma}\left[  c\gamma n - (4-x)\delta - \frac{1}{2}(k-1)\left( \frac{c\gamma n}{\delta} - (4-x)\right)\left( \frac{c\gamma n}{\delta} - (5-x)\right)\right] \\
	&= n - \frac{1}{\gamma}\left[  c\gamma n- (4-x)\delta - \frac{c^2\gamma^2(k-1)}{2}\frac{n^2}{\delta^2} + \frac{c\gamma(9-2x)(k-1)}{2}\frac{n}{\delta} - \frac{1}{2}(k-1)(4-x)(5-x)\right]\\
	&=\frac{1}{\delta^2}\left[ \frac{4-x}{\gamma}\delta^3 + \frac{c^2\gamma(k-1)}{2}n^2 -  (c-1){n\delta^2}  \right] + \frac{(4-x)(5-x)}{2\gamma}(k-1)  - \frac{c(9-2x)(k-1)}{2}\frac{n}{\delta}.
\end{align*}
We have $\delta^2 \geq c\gamma(k-1)n$ and $(c-1)^2 \geq c^2/2$, so 
\begin{align*}
\frac{(c-1)}{c}((c-1)n\delta^2)
	\geq (c-1)^2\gamma(k-1)n^2
	\geq  \frac{c^2}{2}\gamma(k-1)n^2.
\end{align*}
Also,  we have $n > \frac{11}{6}\delta$, and $\frac{c-1}{c} \geq \frac{8}{11}$, hence
\begin{align*}
\frac{1}{c}((c-1)n\delta^2)
	> \frac{8}{11}\cdot \frac{11}{6}\delta^3
	= \frac{4}{3}\delta^3
	\geq \frac{4-x}{\gamma}\delta^3.
\end{align*}
Summing these inequalities, we get that 
\[
\left[ \frac{4-x}{\gamma}\delta^3 + \frac{c^2\gamma(k-1)}{2}n^2 - (c-1){n\delta^2}\right] < 0
\]
and hence $n_t <\frac{(4-x)(5-x)}{2\gamma}(k-1)  \leq \frac{20}{2\gamma}(k-1) \leq \frac{10}{3}(k-1)$.
However, $\delta_t \geq 3(k-1)$, so if the process has not terminated prior to the $(t+1)^{\text{st}}$ iteration, $G_t$ is $k$-connected by Lemma \ref{lemma:kconn}.
\end{proof}

Theorem \ref{thm:fewerparts} immediately yields the following.  

\begin{cor}
Suppose Conjecture~\ref{conj:mader} holds.
We then see that if $G$ is a graph with minimum degree $\delta$ where $\delta \geq \sqrt{11(k-1)n}$, then $G$ has a $k$-proper partition into at most $\frac{11n}{\delta}$ parts.
\end{cor}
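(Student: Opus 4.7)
The plan is to apply Theorem~\ref{thm:fewerparts} directly with carefully chosen constants, using the conjectural bound $\gamma = 3$ supplied by Conjecture~\ref{conj:mader}. Since Mader already proved the lower bound $\gamma \geq 3$, Conjecture~\ref{conj:mader} asserts $d(k) \leq 3(k-1)$ for all $k \geq 2$, which taken with the definition of $\gamma$ gives exactly $\gamma = 3$.

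With $\gamma = 3$ in hand, I would set $c = \tfrac{11}{3}$ in Theorem~\ref{thm:fewerparts}. This choice satisfies the hypothesis $c \geq \tfrac{11}{3}$ required by that theorem. The degree condition then becomes
\[
\sqrt{c\gamma(k-1)n} \;=\; \sqrt{\tfrac{11}{3}\cdot 3 \cdot (k-1)n} \;=\; \sqrt{11(k-1)n},
\]
matching the hypothesis of the corollary exactly. Likewise, the bound on the number of parts becomes
\[
\left\lfloor \frac{c\gamma n}{\delta}\right\rfloor \;=\; \left\lfloor \frac{11n}{\delta}\right\rfloor \;\leq\; \frac{11n}{\delta},
\]
which is precisely the desired conclusion.

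There is essentially no obstacle here: the corollary is a direct numerical specialization of Theorem~\ref{thm:fewerparts}, and the only content is verifying that the hypotheses of that theorem are met under the assumption of Conjecture~\ref{conj:mader}. The one conceptual step worth emphasizing is that Conjecture~\ref{conj:mader} yields not merely an inequality but the equality $\gamma = 3$, because the matching lower bound $\gamma \geq 3$ is already established unconditionally by Mader's construction. With that observation the corollary follows in a single invocation of Theorem~\ref{thm:fewerparts}.
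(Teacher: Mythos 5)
Your proposal is correct and matches the paper's approach: the paper simply states that the corollary ``immediately yields'' from Theorem~\ref{thm:fewerparts}, and your choice of $c = \tfrac{11}{3}$ together with $\gamma = 3$ under Conjecture~\ref{conj:mader} is exactly the intended specialization. The only (harmless) over-precision in your write-up is invoking the equality $\gamma = 3$; for the implication to go through one only needs $\gamma \leq 3$, since a smaller $\gamma$ both weakens the hypothesis of Theorem~\ref{thm:fewerparts} and tightens its conclusion.
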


We are now ready to prove Theorem \ref{thm2}.

\proof Observe that the proof of Theorem~\ref{thm:fewerparts} holds at every step when substituting $\gamma = \frac{193}{60}$ by using Corollary~\ref{cor:Yuster} to imply that $G_i$ contains a $\lfloor\frac{60\delta_i}{193}\rfloor$-connected subgraph. 
Finally, note that $\left(\frac{11}{3}\right)\frac{193}{60}=\frac{2123}{180}$.\qed

\section{Application:  Edit Distance to the Family of $k$-connected Graphs}
Define the \textit{edit distance} between two graphs $G$ and $H$ to be the number of edges one must add or remove to obtain $H$ from $G$ (edit distance was introduced independently in \cite{a,akm,r}). More generally, the edit distance between a graph $G$ and a set of graphs $\mathcal{G}$ is the minimum edit distance between $G$ and some graph in $\mathcal{G}$.  

Utilizing Theorem~\ref{thm2} and observing that $2123/180=11.79\overline{4}<11.8$  we obtain the following corollary, which is a refinement of Corollary 11 in \cite{FMW} for large enough $k$.  

\begin{cor}
Let $k\ge 2$ and let $G$ be a graph of order $n$.  If $\delta(G) \geq \sqrt{11.8(k-1)n}$, then the edit distance between $G$ and the family of $k$-connected graphs of order $n$ is at most $\frac{11.8kn}{\delta(G)}-k< k(4\sqrt{n}-1)$.   
\end{cor}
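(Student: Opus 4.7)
The plan is to combine Theorem~\ref{thm2} with a standard gluing argument. First, apply Theorem~\ref{thm2} to $G$ to obtain a $k$-proper partition $\P=\{P_1,\ldots,P_p\}$ of $V(G)$ with
\[
p \;\leq\; \frac{2123\,n}{180\,\delta(G)} \;<\; \frac{11.8\,n}{\delta(G)}.
\]
Since each induced subgraph $G[P_i]$ is $k$-connected, $|P_i|\geq k+1$ for every $i$.

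Next, I will merge the parts into a single $k$-connected spanning supergraph of $G$ by adding a small number of edges. The key tool is a standard Menger-style fact: if $H_1$ and $H_2$ are vertex-disjoint $k$-connected graphs and $F$ is a set of $k$ edges between them whose endpoints consist of $k$ distinct vertices of $H_1$ and $k$ distinct vertices of $H_2$, then $H_1\cup H_2\cup F$ is $k$-connected. Indeed, for any $S$ with $|S|<k$, a counting argument on the $2k$ distinct endpoints of $F$ produces some edge of $F$ avoiding $S$, and this edge bridges the connected graphs $H_1-(S\cap V(H_1))$ and $H_2-(S\cap V(H_2))$. Processing the parts in order, I define $Q_1=G[P_1]$ and, for $i\geq 2$, obtain $Q_i$ from $Q_{i-1}\cup G[P_i]$ by adding $k$ edges between $P_i$ and $P_1\cup\cdots\cup P_{i-1}$ with $k$ distinct endpoints on each side (feasible since $|P_i|\geq k+1$ and $|P_1\cup\cdots\cup P_{i-1}|\geq k+1$). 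By induction each $Q_i$ is $k$-connected, so $Q_p$ is a $k$-connected spanning supergraph of $G$ obtained by adding at most $k(p-1)$ edges in total. Since some of these edges may already lie in $E(G)$, and no edges of $G$ are removed, the edit distance to the family of $k$-connected graphs of order $n$ is at most
\[
k(p-1) \;\leq\; \frac{11.8\,kn}{\delta(G)}-k.
\]

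For the stated upper bound $k(4\sqrt{n}-1)$, it suffices to show $11.8\,n/\delta(G)<4\sqrt{n}$. From $\delta(G)\geq \sqrt{11.8(k-1)n}$ and $k\geq 2$ we have $\delta(G)\geq \sqrt{11.8\,n}$, whence
\[
\frac{11.8\,n}{\delta(G)} \;\leq\; \sqrt{11.8\,n} \;<\; 4\sqrt{n},
\]
because $\sqrt{11.8}<4$. The substantive ingredient is Theorem~\ref{thm2}; the rest is just the gluing lemma and routine bookkeeping. The only point that requires care is ensuring $k$ distinct endpoints on each side at every bridging step, and that is automatic from the lower bound $|P_i|\geq k+1$ guaranteed by $k$-connectivity of the parts.
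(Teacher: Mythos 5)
Your proof is correct and takes essentially the same approach as the paper: obtain the $k$-proper partition from Theorem~\ref{thm2}, then sequentially merge the parts by adding a $k$-matching at each step, giving at most $k(p-1)\le\frac{11.8kn}{\delta(G)}-k$ added edges. The only cosmetic difference is that the paper adds the $k$-matching directly between consecutive parts $H_i$ and $H_{i+1}$ while you add it between $P_i$ and the union of previously merged parts (and you spell out the Menger-type gluing lemma that the paper treats as folklore), but both are instances of the identical argument.
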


\begin{proof}
Let $H_1,\ldots ,H_l$ be the $k$-connected subgraphs of the $k$-proper partition of $G$ guaranteed by Theorem~\ref{thm2}; note that $l\le\frac{11.8n}{\delta(G)}$. 
For each $i\in \{1,\ldots, l-1\}$, it is possible to produce a matching of size $k$ between $H_i$ and $H_{i+1}$ by adding at most $k$ 
edges between $H_i$ and $H_{i+1}$. Thus, adding at most $k\left(\frac{11.8n}{\delta(G)}\right)$ edges yields a $k$-connected graph. \end{proof}

\section{Conclusion}

We note here that it is possible to slightly improve the degree conditions in Theorems \ref{thm2} and \ref{thm3} at the expense of the number of parts in the partition.  In particular, a greedy approach identical to that used to prove Theorem \ref{thm3} can be used to prove the following.  

\begin{thm}\label{thm:main}
Let $k \geq 2$, $c_k \geq \frac{k-1}{k} \cdot 2\gamma$, and $p = \sqrt{ \frac{c_k n}{k} }$.
If $G$ is a graph of order $n$ with $\delta(G) \geq kp = \sqrt{c_k k n}$, then $G$ has a $k$-proper partition into at most $\frac{k}{k-1}p$ parts.
\end{thm}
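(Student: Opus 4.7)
The plan is to use the greedy strategy from the proof of Theorem~\ref{thm:fewerparts} verbatim. Starting with $G_0 = G$, at iteration $i$ I would select a largest $k$-connected subgraph $H_i$ of $G_i$ and set $G_{i+1} = G_i - V(H_i)$; the process terminates when $H_i = G_i$. As in the proof of Theorem~\ref{thm:fewerparts}, the maximality of $H_i$ combined with Menger's theorem forces every vertex of $V(G_i) \setminus V(H_i)$ to send at most $k-1$ edges into $H_i$, yielding $\delta_{i+1} \geq \delta_i - (k-1)$, and hence $\delta_i \geq \delta - i(k-1) \geq kp - i(k-1)$. Observation~\ref{obs:gamma} further guarantees that whenever $\delta_i \geq \gamma(k-1)$, the graph $G_i$ contains a $k$-connected subgraph of order at least $\lfloor \delta_i / \gamma \rfloor + 2 > \delta_i/\gamma$.

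Let $T$ be the largest integer not exceeding $\tfrac{k}{k-1}p$ such that $\delta_i \geq \gamma(k-1)$ holds for every $0 \leq i \leq T-1$; under the hypothesis $\delta \geq kp$, the value of $T$ differs from $\tfrac{k}{k-1}p$ by at most a constant depending on $\gamma$, negligible for the leading-order estimate below. Assuming for contradiction that the greedy process has not terminated by iteration $T-1$, each $H_i$ is a proper $k$-connected subgraph of $G_i$ with $|V(H_i)| > \delta_i/\gamma$, and summing these bounds gives
\[
\sum_{i=0}^{T-1} |V(H_i)| \;\geq\; \frac{T\delta}{\gamma} - \frac{(k-1)T(T-1)}{2\gamma} + T.
\]
Substituting $\delta \geq kp$ and $T$ close to $\tfrac{k}{k-1}p$, the leading term simplifies to $\tfrac{k^2 p^2}{2\gamma(k-1)} = \tfrac{k c_k n}{2\gamma (k-1)}$, and the hypothesis $c_k \geq \tfrac{k-1}{k} \cdot 2\gamma$ is precisely what forces this expression above $n$. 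Since the $V(H_i)$ are pairwise disjoint subsets of $V(G)$ we also have $\sum_{i=0}^{T-1}|V(H_i)| \leq n$, a contradiction. Hence the process must have terminated at some $s \leq T-1$ with $H_s = G_s$, yielding a $k$-proper partition with at most $T \leq \tfrac{k}{k-1}p$ parts.

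The main obstacle I anticipate is the bookkeeping around the cutoff $T$: it must be large enough, roughly $\tfrac{k}{k-1}p$, so that the telescoping sum of the $\delta_i/\gamma$ size bounds reaches $n$, but small enough that $\delta_i$ stays above the threshold $\gamma(k-1)$ required by Observation~\ref{obs:gamma} throughout $0 \leq i \leq T-1$. Following the proof of Theorem~\ref{thm:fewerparts}, an equivalent way to handle the tail is to continue the process until $n_T$ has shrunk enough for Lemma~\ref{lemma:kconn} to force $G_T$ itself to be $k$-connected, producing the final part. The constant $\tfrac{k-1}{k}\cdot 2\gamma$ in the hypothesis is exactly what makes the arithmetic-progression estimate match $n$ on the nose.
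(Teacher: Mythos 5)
Your proposal follows essentially the same greedy strategy the paper uses for Theorems~\ref{thm3} and~\ref{thm:main}: repeatedly strip off a largest $k$-connected subgraph $H_i$, use Menger to get $\delta_{i+1}\geq\delta_i-(k-1)$, and use Observation~\ref{obs:gamma} to lower-bound $|V(H_i)|$ by roughly $\delta_i/\gamma$, telescoping to account for $n$ vertices after about $\frac{k}{k-1}p$ steps. You also correctly identify that the hypothesis $c_k\geq\frac{k-1}{k}\cdot 2\gamma$ is exactly what makes the arithmetic-series estimate reach $n$.

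The one substantive point worth flagging concerns your primary termination argument. Because $c_k$ is set at the \emph{exact} threshold where the leading term $\frac{kc_kn}{2\gamma(k-1)}$ equals $n$, the lower-order corrections (the $-\gamma+1$ shift in $T$, the $T(T-1)/2$ versus $T^2/2$ discrepancy, the $+T$ from the size bound) are not automatically negligible; one must verify they combine to a nonnegative quantity, and for small $p$ (equivalently small $n$) this is not immediate. The paper's proof of Theorem~\ref{thm3}, to which Theorem~\ref{thm:main} is referred, avoids this delicacy by a slightly different finish: rather than deriving a contradiction from $\sum|V(H_i)|>n$, it tracks $n_t$ downward until $n_t$ is so small relative to $\delta_t$ that Lemma~\ref{lemma:kconn} forces $G_t$ itself to be $k$-connected, guaranteeing termination. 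You mention this alternative explicitly at the end, and it is indeed the more robust way to close the argument; if you make it your primary route rather than a fallback, the bookkeeping issue you flag disappears. Otherwise the proposal is correct and in line with the paper's intent.
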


This gives rise to the following, which improves on the degree condition in Theorem \ref{thm2}.  

\begin{thm}\label{partition_improve}
If $G$ is a graph of order $n$ with minimum degree $$\delta(G)\geq kp = \sqrt{\frac{193}{30}(k-1)n},$$ then $G$ has a $k$-proper partition into at most $\frac{k}{k-1}p$ parts.
\end{thm}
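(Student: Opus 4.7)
The plan is to derive Theorem~\ref{partition_improve} as a direct specialization of Theorem~\ref{thm:main}, using the effective value $\gamma = \tfrac{193}{60}$ guaranteed by Corollary~\ref{cor:Yuster}. Although $\gamma$ was defined as a supremum and Theorem~\ref{thm:Yuster} only controls it when $n \geq \tfrac{9}{4}(k-1)$, the argument behind Theorem~\ref{thm:main} only ever invokes Observation~\ref{obs:gamma} in order to select a highly connected subgraph of each intermediate graph $G_i$. This is precisely the role that Corollary~\ref{cor:Yuster} plays in the passage from Theorem~\ref{thm3} to Theorem~\ref{thm2}, and the same substitution works verbatim here.

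With $\gamma = \tfrac{193}{60}$, I would take the smallest admissible value of the parameter in Theorem~\ref{thm:main}, namely
\[
c_k \;=\; \frac{k-1}{k}\cdot 2\gamma \;=\; \frac{193(k-1)}{30k},
\]
so that the hypothesis $c_k \geq \tfrac{k-1}{k}\cdot 2\gamma$ is satisfied with equality. Setting $p = \sqrt{c_k n/k}$ as in Theorem~\ref{thm:main}, a direct computation yields
\[
kp \;=\; \sqrt{c_k k n} \;=\; \sqrt{\tfrac{193}{30}(k-1)n},
\]
which is exactly the minimum-degree threshold appearing in the statement of Theorem~\ref{partition_improve}. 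Hence Theorem~\ref{thm:main} applies to $G$, and its conclusion produces a $k$-proper partition of $G$ into at most $\tfrac{k}{k-1}p$ parts, as required.

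There is no genuine obstacle beyond the arithmetic verification above; all of the non-trivial work sits inside Theorem~\ref{thm:main}, whose proof follows the greedy pattern of Theorem~\ref{thm3} with $\gamma$ replaced by $\tfrac{193}{60}$ via Corollary~\ref{cor:Yuster}. The qualitative point of this specialization is the trade-off flagged in the Conclusion: the bound $\tfrac{k}{k-1}p$ on $|\P|$ is a fixed function of $n$, $k$, and $c_k$ only, rather than a decreasing function of the actual $\delta(G)$ as in Theorem~\ref{thm2}. Giving up the $\delta(G)$-dependence of the bound on $|\P|$ is exactly what buys us the weaker degree threshold $\sqrt{\tfrac{193}{30}(k-1)n}$, strictly smaller than the $\sqrt{\tfrac{2123}{180}(k-1)n}$ required by Theorem~\ref{thm2}.
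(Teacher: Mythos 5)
Your derivation is correct and matches what the paper intends: Theorem~\ref{partition_improve} is stated immediately after Theorem~\ref{thm:main} with no separate proof, and the only sensible reading is exactly your specialization with $\gamma$ effectively replaced by $\tfrac{193}{60}$ via Corollary~\ref{cor:Yuster} (mirroring how the paper obtains Theorem~\ref{thm2} from Theorem~\ref{thm3}), then taking $c_k = \tfrac{k-1}{k}\cdot 2\gamma$ at its minimum and checking $kp = \sqrt{\tfrac{193}{30}(k-1)n}$. You also correctly flag that one cannot literally substitute a numerical value for the supremum $\gamma$ in the statement of Theorem~\ref{thm:main}; rather one reruns the greedy argument with Corollary~\ref{cor:Yuster} in place of Observation~\ref{obs:gamma}, which is precisely the move the paper makes.
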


\subsection*{Acknowledgments}

The second author would like to acknowledge the generous support of the Simons Foundation.  His research is supported in part by Simons Foundation Collaboration Grant \#206692.   The third author would like to thank the laboratory LRI of the University Paris South and Digiteo foundation for their generous hospitality.  He was able to carry out part of this research during his visit there. Also, the third author's research is supported by the Japan Society for the Promotion of Science Grant-in-Aid for Young Scientists (B) (20740095).

\end{document}